\theoremstyle{plain}
\newtheorem{theorem}{Theorem}
\newtheorem{proposition}[theorem]{Proposition}
\newtheorem{lemma}[theorem]{Lemma}
\theoremstyle{remark}
\newtheorem*{notations}{Notations and conventions}
\newtheorem{remark}[theorem]{Remark}
\theoremstyle{definition}
\newtheorem{definition}[theorem]{Definition}
\numberwithin{equation}{section}
\DeclareMathOperator{\Aut}{Aut}
\DeclareMathOperator{\End}{End}
\DeclareMathOperator{\Res}{Res}
\DeclareMathOperator{\St}{St}
\DeclareMathOperator{\Tr}{Tr}
\newcommand{\bZ}{\mathbb Z}
\newcommand{\bQ}{\mathbb Q}
\newcommand{\bR}{\mathbb R}
\newcommand{\bC}{\mathbb C}
\newcommand{\sA}{\mathcal A}
\newcommand{\sB}{\mathcal B}
\newcommand{\tU}{\widetilde U}
\newcommand{\tV}{\widetilde V}
\newcommand{\tW}{\widetilde W}
\newcommand{\pel}{\textsc{pel}}
\begin{document}

\title[Tate twists of Hodge structures]{Tate twists of Hodge structures arising from abelian varieties of type IV}

\author{Salman Abdulali}
\address{Department of Mathematics, East Carolina University, Greenville, NC 27858, USA}
\email{abdulalis@ecu.edu}

\subjclass[2010]{Primary 14C30, 14K20}

\begin{abstract}
We show that certain abelian varieties $A$ have the property that for every Hodge structure $V$ in the cohomology of $A$, every effective Tate twist of $V$ occurs in the cohomology of some abelian variety. We deduce the general Hodge conjecture for certain non-simple abelian varieties of type~IV.
\end{abstract}

\maketitle

\section{Introduction}

A (rational) Hodge structure
$V_{\bC} = \bigoplus_{p+q=n} V^{p,q}$
is said to be \emph{effective} if $V^{p,q} = 0$ unless $p$,~$q \geq 0$, and,
it is said to be \emph{geometric} (or \emph{motivic}) if it is isomorphic to a Hodge substructure of $H^n(X, \bQ)$ for some smooth, projective variety $X$ over $\bC$.
For $m \in \bZ$, the Tate twist $V(m)$ is the Hodge structure of weight $n-2m$ defined by
$V(m)^{p,q} = V^{p+m,q+m}$.

A geometric Hodge structure must be effective and polarizable, but not conversely (Grothendieck \cite{Grothendieck}*{p.~300, 2nd footnote}). It is well-known that any polarizable Hodge structure of weight $1$ is the first cohomology of an abelian variety, and hence geometric. In \cite{Abdulali2005} we have shown that any Hodge structure of CM-type is geometric. These are the only known criteria for an abstract Hodge structure to be geometric \cite{GGK}*{p.~305}.

The general Hodge conjecture as formulated by Grothendieck \cite{Grothendieck} implies that any effective Tate twist of a geometric Hodge structure is again geometric.
In a series of papers \cites{Abdulali1997, Abdulali1999, Abdulali2000, type3ghc, Abdulali2004, Abdulali2005} we have shown that, for certain abelian varieties $A$, every effective Tate twist of a Hodge structure in the cohomology of $A$ is isomorphic to a Hodge structure occurring in the cohomology of some abelian variety. Moreover, we have used this to prove the general Hodge conjecture for certain abelian varieties. We have also shown the existence of a Hodge structure which occurs in the cohomology of an abelian variety, but which has an effective Tate twist that does not occur in the cohomology of \emph{any} abelian variety \cite{type3ghc}*{Theorem~5.5, p.~926}.

Our earlier results apply to abelian varieties of type IV in only very special cases (see \S \ref{groups} for the definition of the type of an abelian variety) --- namely when the Hodge group is semisimple \cite{Abdulali1997}, or when the abelian variety is of CM-type \cite{Abdulali2005}, or when the endomorphism algebra is an imaginary quadratic number field \cite{Abdulali2004}.
The main aim of this paper is to remove these restrictions on the endomorphism algebra; however, we still need a fairly strong restriction on the signature of the hermitian form determining the polarization; see Theorem \ref{maintheorem} for the precise statement.
As an application of these results we deduce the general Hodge conjecture for \emph{products} of some abelian varieties of type IV (Theorem~\ref{ghc}).

\begin{notations}
All abelian varieties are over $\bC$.
Representations are always finite dimensional.
For an abelian variety $A$, we let
\[
	D(A) = \End_{\bQ} (A) := \End (A) \otimes \bQ
\]
be its endomorphism algebra, $L(A)$ its Lefschetz group, $G(A)$ its Hodge group, and, $G'(A)$ the derived group of $G(A)$; see \S \ref{groups} for more details.
\end{notations}

\section{Hodge groups and Lefschetz groups}
\label{groups}

Let $A$ be an abelian variety over $\bC$, and let $V = H^1(A, \bQ)$.
The Hodge group $G(A)$ is defined in Mumford \cite{Mumford1966}.
It is the reductive $\bQ$-algebraic subgroup of $GL(V)$ characterized by the property that its invariants in $H^{\star}(A^n, \bQ)$ are precisely the Hodge classes for any positive integer $n$.

The Lefschetz group $L(A)$ is defined in Murty \cite{murtybanff}*{\S 3.6.2, p.~93}.
It is the reductive $\bQ$-algebraic subgroup of $GL(V)$ characterized by the property that for any positive integer $n$, its invariants in $H^{\star}(A^n, \bQ)$ form the ring generated by divisor classes.
Since any divisor class is a Hodge class, it follows that $G(A) \subset L(A)$.
Note that the \lq\lq Lefschetz group\rq\rq\ defined by Murty in  \cite{Murty} is the connected component of the identity in the group  defined as the Lefschetz group in \cite{murtybanff}.

We say that $A$ is of \emph{\pel-type} if the semisimple parts of $G(A)$ and $L(A)^0$ are equal.
A simple abelian variety is of \pel-type if and only if it is a general member of a \pel-family of abelian varieties (see \cite{Abdulali1997}*{\S1 and \S4.6}).

Suppose $A$ is a simple abelian variety.
Let $\beta$ be an alternating Riemann form for $A$.
Let $D = D(A) = \End (A) \otimes \bQ$ be its endomorphism algebra.
By Albert's classification, $D$ is one of the following \cite{Shimura1}:
\begin{description}
	\item[type I]
		a totally real number field $F$
	\item[type II]
		a totally indefinite quaternion algebra over a totally real number field $F$
	\item[type III]
		a totally definite quaternion algebra over a totally real number field $F$
	\item[type IV]
		a division algebra over a CM-field $E$.
		In this case let $F$ be the maximal totally real subfield of $E$.
\end{description}
In each case there exists an involution $x \mapsto \overline{x}$ of $D$, and a unique $F$-bilinear form $T \colon V \times V \to D$ such that $\beta (x,y) = \Tr_{D/\bQ} T(x,y)$, $T(ax,by) = aT(x,y)\overline{b}$, and, $T(y,x) = -\overline{T(x,y)}$ for all $x$,~$y \in V$, $a$,~$b \in D$ \cite{Shimura2}*{Lemma~1.2, p.~162}.
The Lefschetz group is then the restriction of scalars, from $F$ to $\bQ$ of the unitary group of $T$:
\begin{equation}
\label{lefschetzproduct}
	L(A) = \Res_{F/\bQ} U(T) = \Res_{F/\bQ} \Aut_D (V, T).
\end{equation}

Let $S$ be the set of embeddings of $F$ into $\bR$.
We can then write
\begin{equation}
\label{Sdecomp}
	L(A)_{\bR} = \prod_{\alpha \in S} L_{\alpha} \qquad \text{and} \qquad V_{\bR} = \bigoplus_{\alpha \in S} V_{\alpha},
\end{equation}
where $L_{\alpha}$ acts trivially on $V_{\alpha'}$ unless $\alpha = \alpha'$.
$L_{\alpha}$ and its action on $V_{\alpha}$ are given as follows \cite{Murty}:
\begin{description}
	\item[type I]
		$L_{\alpha} = Sp(V_{\alpha}, \beta_{\alpha})$ is a symplectic group
		acting via its standard representation on $V_{\alpha}$.
	\item[type II]
		$L_{\alpha}$ is a symplectic group acting on $V_{\alpha}$
		as two copies of the standard representation.
	\item[type III]
		$L_{\alpha,\bC}$ is an orthogonal group acting on $V_{\alpha,\bC}$
		as two copies of the standard representation.
	\item[type IV]
		$L_{\alpha} = U(p_{\alpha}, q_{\alpha})$, and
		$L_{\alpha,\bC} \cong GL_m(\bC)$ acts on $V_{\alpha,\bC}$
		as the direct sum of the standard representation and its contragredient.
\end{description}

\section{Dominating Varieties}

We say that a Hodge structure $V$ is \emph{fully twisted} if $V$ is effective, but the Tate twist $V(1)$ is not effective.
Thus $V_{\bC} = \bigoplus_{p+q=n} V^{p,q}$ is fully twisted if and only if it is effective and $V^{n,0} \neq 0$.

We say that a smooth, projective algebraic variety $A$ over $\bC$ is \emph{dominated} by a class $\sA$ of smooth, projective complex algebraic varieties if, given any irreducible Hodge structure $V$ in the cohomology of $A$, there exists a fully twisted Hodge structure $V'$ in the cohomology of some $X \in \sA$ such that $V'$ is isomorphic to a Tate twist of $V$.

\begin{proposition} [Grothendieck \cite{Grothendieck}*{p.~301}]
\label{groprop}
Let $A$ be a smooth projective variety over $\bC$ which is dominated by $\sA$.
If the usual Hodge conjecture holds for $A \times B$ for each $B \in \sA$, then the general Hodge conjecture holds for~$A$.
\end{proposition}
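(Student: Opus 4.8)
The plan is to follow Grothendieck's original observation: reduce the general Hodge conjecture for $A$ to the usual Hodge conjecture for the products $A \times B$, $B \in \sA$, by using domination to produce the required algebraic correspondences. Recall that the general Hodge conjecture for $A$ asserts that, for all $n$ and all $c$, the largest Hodge substructure $N^c H^n(A,\bQ)$ of $H^n(A,\bQ)$ contained in $\bigoplus_{p \geq c} H^{p,n-p}$ coincides with the geometric coniveau part, that is, with $\sum_Y \operatorname{Im}(H^n_Y(A,\bQ) \to H^n(A,\bQ))$, where $Y$ runs over closed algebraic subsets of $A$ of codimension $\geq c$. The inclusion of the geometric part into $N^cH^n(A,\bQ)$ is automatic, and, since Hodge structures occurring in the cohomology of a smooth projective variety are polarizable, hence semisimple, it suffices to prove that each irreducible Hodge substructure $V$ of $N^cH^n(A,\bQ)$ is contained in $\operatorname{Im}(H^n_Y(A,\bQ) \to H^n(A,\bQ))$ for some single closed $Y$ of codimension $\geq c$.

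So fix $n$, $c$, and an irreducible $V \subseteq N^cH^n(A,\bQ)$, and set $p_0 = \min\{\, p : V^{p,n-p} \neq 0 \,\}$, so that $p_0 \geq c$. The Hodge structure $V$ is effective, being a subspace of $H^n$ of a projective variety, and the Tate twist $V(p_0)$ is then effective of weight $n - 2p_0$ with $V(p_0)^{n-2p_0,0} \neq 0$; that is, $V(p_0)$ is fully twisted. Since $A$ is dominated by $\sA$, there exist $B \in \sA$ and a fully twisted Hodge structure $V'$ occurring in the cohomology of $B$ which is isomorphic to a Tate twist of $V$; because $V'$ is fully twisted this forces $V' \cong V(p_0)$, so in particular $V'$ occurs in $H^{n-2p_0}(B,\bQ)$. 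Fix an isomorphism $\phi \colon V' \xrightarrow{\ \sim\ } V(p_0)$ of Hodge structures. Composing the Hodge-theoretic projection of $H^{n-2p_0}(B,\bQ)$ onto its summand $V'$ with $\phi$ and with the inclusion $V(p_0) \hookrightarrow H^n(A,\bQ)(p_0)$ gives a morphism of Hodge structures $\psi \colon H^{n-2p_0}(B,\bQ) \to H^n(A,\bQ)(p_0)$. By the Künneth formula together with Poincaré duality on $B$, such a morphism is the same datum as a Hodge class $\Gamma$ of type $(\dim B + p_0, \dim B + p_0)$ in $H^{2\dim B + 2p_0}(A \times B, \bQ)$.

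By hypothesis the usual Hodge conjecture holds for $A \times B$, so $\Gamma = [Z]$ for an algebraic cycle $Z$ on $A \times B$ of codimension $\dim B + p_0$, that is, of dimension $\dim A - p_0$. Put $Y = \overline{\operatorname{pr}_A(|Z|)} \subseteq A$; every component of $Z$ has dimension $\dim A - p_0$, so $Y$ has codimension $\geq p_0 \geq c$ in $A$. The correspondence $\alpha \mapsto (\operatorname{pr}_A)_*(\operatorname{pr}_B^{\,*}\alpha \cdot [Z])$ induced by $Z$ agrees, as a $\bQ$-linear map $H^{n-2p_0}(B,\bQ) \to H^n(A,\bQ)$, with the map underlying $\psi$; and because $[Z]$ is supported on $|Z|$ and proper pushforward preserves supports, the image of this correspondence lies in $\operatorname{Im}(H^n_Y(A,\bQ) \to H^n(A,\bQ))$. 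Restricting to $V' \subseteq H^{n-2p_0}(B,\bQ)$, where $\psi$ restricts to the isomorphism onto $V(p_0)$ whose underlying subspace of $H^n(A,\bQ)$ is exactly $V$, we conclude $V \subseteq \operatorname{Im}(H^n_Y(A,\bQ) \to H^n(A,\bQ))$, as required. Carrying this out for every irreducible summand of every $N^cH^n(A,\bQ)$ establishes the general Hodge conjecture for $A$.

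Most of the argument is formal once domination is available — the notion of ``dominated by $\sA$'' was designed precisely so that the Hodge conjecture on $A \times B$ delivers the needed correspondence. The only genuinely geometric input is the description of $N^cH^n(A,\bQ)$ in terms of supports (equivalently, in terms of Gysin maps from resolutions of codimension-$c$ subvarieties), which rests on resolution of singularities; and the one point to handle with real care is keeping the Tate twist by $p_0$ straight throughout the translation between the Hodge-structure morphism $\psi$, the Hodge class $\Gamma$, and the algebraic correspondence attached to $Z$.
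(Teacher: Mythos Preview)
Your argument is correct and is exactly the standard Grothendieck reduction. The paper does not actually give a proof here: it simply writes ``See the proof of \cite{Abdulali1997}*{Proposition~2.1, p.~343}'', and the argument in that reference is the one you have written out, so there is nothing further to compare.
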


\begin{proof}[Reference to proof]
See the proof of \cite{Abdulali1997}*{Proposition~2.1, p.~343}.
\end{proof}

Let $A$ be an abelian variety.
Let $k$ be a subfield of $\bC$.
Let $\sA$ be a class of abelian varieties.
We say that $A$ is $k$-\emph{dominated} by $\sA$ if, given any irreducible $G(A)_k$-submodule, $V$, of $H^{\star}(A,k)$, there exist $B \in \sA$, and a $G(B)_k$-submodule $V'$ of $H^n(B,k)$ for some $n$, such that $V$ and $V'$ are isomorphic as $G(A \times B)_k$-modules, and, $V'_{\bC}$ contains a nonzero $(n,0)$-form.
(Note that $G(A \times B)$ is a subgroup of $G(A) \times G(B)$, so it makes sense to consider $V$ and $V'$ as $G(A \times B)_k$-modules.)
In particular, $A$ is dominated by $\sA$ if and only if $A$ is $\bQ$-dominated by $\sA$.

\begin{lemma}
\label{fieldlemma}
If an abelian variety $A$ is $k$-dominated by $\sA$ for some subfield $k$ of $\bC$, then, $A$ is dominated by $\sA$.
\end{lemma}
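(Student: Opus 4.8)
The plan is to verify directly that $A$ is $\bQ$-dominated by $\sA$; since, as remarked above, that is the same as being dominated by $\sA$, this will suffice. The key point is that a \emph{single} $G(A)_k$-irreducible constituent of $V\otimes_{\bQ}k$ already carries all the information one needs to treat a given irreducible $G(A)_{\bQ}$-submodule $V$ of $H^{\star}(A,\bQ)$, so there is no need to reassemble the various $G(A)_k$-constituents of $V\otimes_{\bQ}k$.

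Concretely, I would begin with an arbitrary irreducible $G(A)_{\bQ}$-submodule $V$ of $H^{\star}(A,\bQ)$ and set $V_k = V\otimes_{\bQ}k\subseteq H^{\star}(A,k)$. As $G(A)_k$ is reductive and $k$ has characteristic zero, $V_k$ is semisimple; fix a $G(A)_k$-irreducible summand $W$ and the associated equivariant projection $V_k\twoheadrightarrow W$. Applying the hypothesis that $A$ is $k$-dominated by $\sA$ to $W$ yields $B\in\sA$, an integer $n$, and a $G(B)_k$-submodule $W'$ of $H^n(B,k)$, together with a $G(A\times B)_k$-isomorphism $W\cong W'$ for which $W'_{\bC}$ contains a nonzero $(n,0)$-form. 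Composing the projection, this isomorphism, and the inclusion $W'\hookrightarrow H^n(B,k)$ gives a nonzero $G(A\times B)_k$-homomorphism $V_k\to H^n(B,k)$. Since the formation of $\operatorname{Hom}$ between finite-dimensional representations commutes with the flat base change $\bQ\to k$, we get $\operatorname{Hom}_{G(A\times B)_{\bQ}}(V,H^n(B,\bQ))\neq 0$; choose a nonzero $g$ in it. Because the $G(A\times B)_{\bQ}$-action on $V$ factors through the canonical surjection $G(A\times B)\to G(A)$, the module $V$ is still irreducible over $G(A\times B)_{\bQ}$, so $g$ is injective; and because $G(A\times B)$ acts on $H^n(B,\bQ)$ through the canonical surjection $G(A\times B)\to G(B)$, the image $V':=g(V)$ is a $G(B)_{\bQ}$-submodule of $H^n(B,\bQ)$ isomorphic to $V$ as a $G(A\times B)_{\bQ}$-module.

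The delicate point, and the one I expect to be the main obstacle, is to show that $V'_{\bC}$ contains a nonzero $(n,0)$-form, i.e.\ that this condition descends from $k$ to $\bQ$. For this I would put $W'':=g_k(W)\subseteq V'_k\subseteq V'_{\bC}$; it is isomorphic to $W$, hence to $W'$, as a $G(A\times B)_k$-module, so $W''_{\bC}\cong W'_{\bC}$ as $G(B)_{\bC}$-modules. Now $H^{n,0}(B)$ is exactly the weight space in $H^n(B,\bC)$ for a fixed character of the subtorus $h(U(1))$ of $G(B)$, and therefore isomorphic $G(B)_{\bC}$-submodules of $H^n(B,\bC)$ meet $H^{n,0}(B)$ in subspaces of the same dimension --- equivalently, isomorphic Hodge structures have the same Hodge numbers. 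Since $W'_{\bC}\cap H^{n,0}(B)\neq 0$ by construction, also $W''_{\bC}\cap H^{n,0}(B)\neq 0$, whence $V'_{\bC}\cap H^{n,0}(B)\neq 0$. Thus $B$, $n$, and $V'$ witness $\bQ$-domination for $V$; as $V$ was arbitrary, $A$ is $\bQ$-dominated, hence dominated, by $\sA$.
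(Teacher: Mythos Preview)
Your argument is correct and follows essentially the same route as the paper: pick a $G(A)_k$-irreducible constituent $W_0$ of $V_k$, invoke $k$-domination to get $B$ and $W'_0\subset H^n(B,k)$, use the base-change identity $\hom_{G_k}(V_k,H^n(B,k))=\hom_G(V,H^n(B,\bQ))\otimes k$ to descend to a nonzero $\bQ$-map $\varphi$, and then check the image is fully twisted. The one place where you differ is the last step. The paper asserts that one may choose $\varphi$ with $\varphi(W_0)=W'_0$ on the nose and hence $W'_{0,\bC}\subset V'_{\bC}$; you instead only use that $g_k(W)\cong W'_0$ as $G(B)_k$-modules and then observe that $H^{n,0}(B)$ is the eigenspace for a fixed character of the Hodge torus $h(U(1))\subset G(B)_{\bR}$, so isomorphic $G(B)_{\bC}$-submodules of $H^n(B,\bC)$ meet $H^{n,0}(B)$ in spaces of equal dimension. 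This is arguably the cleaner justification, since the exact equality $\varphi(W_0)=W'_0$ for some \emph{rational} $\varphi$ is not immediate from the tensor identity alone; your Hodge-torus argument sidesteps that issue entirely.
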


\begin{proof}
Let $W$ be an irreducible Hodge structure in the cohomology of $A$.
Then $W$ is an irreducible $G(A)$-module.
Let $W_0$ be an irreducible $G(A)_k$-submodule of $W_k$.
Then there exist $B \in \sA$, and, a $G(B)_k$-submodule $W'_0$ of $H^n(B,k)$ such that
$W_0$ and $W'_0$ are isomorphic as $G(A \times B)_k$-modules, and,
$W'_{0,\bC}$ contains a nonzero $(n,0)$-form.

Let $G = G(A \times B)$.
Since $W_0$ and $W'_0$ are isomorphic as $G_k$-modules, $\hom_{G_k} (W_k, H^n (B,k))$ is nontrivial.
Since $G(\bQ)$ is Zariski-dense in $G(k)$, we have
\[
\hom_{G_k} (W_k, H^n (B,k)) = \hom_G (W, H^n (B, \bQ)) \otimes k.
\]
Thus $\hom_G (W, H^n (B, \bQ))$ contains a nonzero element $\varphi$
such that $\varphi(W_0) = W'_0$.
Let $W'$ be the image of $\varphi$.
Then $W'$ is fully twisted because $W'_{0,\bC} \subset W'_{\bC}$.
Since $W$ is irreducible, $\varphi$ must be a $G$-isomorphism from $W$ to $W'$.
This means that as Hodge structures, $W$ and $W'$ are isomorphic up to a Tate twist.
\end{proof}

\begin{proposition}
\label{productlemma}
Let $A$ and $B$ be abelian varieties such that $G(A \times B) = G(A) \times G(B)$.
If $A$ is $\bC$-dominated by $\sA$, and $B$ is $\bC$-dominated by 
$\sB$, then, $A \times B$ is $\bC$-dominated by
$\sA \cdot \sB = \{ \, X \times Y \mid X \in \sA, \ Y \in \sB \, \}$.
\end{proposition}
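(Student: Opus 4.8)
The plan is to reduce to external tensor products via the K\"unneth formula, exploit the hypothesis $G(A\times B) = G(A)\times G(B)$ to decompose an arbitrary irreducible submodule, apply $\bC$-domination to each tensor factor separately, and finally glue the two resulting isomorphisms into one isomorphism of $G(A\times B\times X\times Y)_{\bC}$-modules. First I would write $H^{\star}(A,\bC) = \bigoplus_i M_i$ and $H^{\star}(B,\bC) = \bigoplus_j N_j$ as direct sums of irreducible $G(A)_{\bC}$- and $G(B)_{\bC}$-modules (possible since Hodge groups are reductive). By the K\"unneth formula $H^{\star}(A\times B,\bC) = \bigoplus_{i,j} M_i\otimes N_j$, and since $G(A\times B)_{\bC} = G(A)_{\bC}\times G(B)_{\bC}$ and $\bC$ is algebraically closed, each $M_i\otimes N_j$ is an irreducible $G(A\times B)_{\bC}$-module. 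Hence any irreducible $G(A\times B)_{\bC}$-submodule $V$ of $H^{\star}(A\times B,\bC)$ is isomorphic to some $M_{i_0}\otimes N_{j_0}$; write $M = M_{i_0}$ and $N = N_{j_0}$, which are irreducible $G(A)_{\bC}$- and $G(B)_{\bC}$-submodules of $H^{\star}(A,\bC)$ and $H^{\star}(B,\bC)$ respectively (each lying in a single cohomological degree).

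Now apply the hypothesis that $A$ is $\bC$-dominated by $\sA$ to $M$: there exist $X\in\sA$, an integer $p$, and a $G(X)_{\bC}$-submodule $M'$ of $H^p(X,\bC)$ such that $M\cong M'$ as $G(A\times X)_{\bC}$-modules and $M'$ contains a nonzero $(p,0)$-form. Likewise, applying that $B$ is $\bC$-dominated by $\sB$ to $N$ gives $Y\in\sB$, an integer $q$, and a $G(Y)_{\bC}$-submodule $N'$ of $H^q(Y,\bC)$ with $N\cong N'$ as $G(B\times Y)_{\bC}$-modules and $N'$ containing a nonzero $(q,0)$-form. Set $n = p+q$ and $V' = M'\otimes N'$. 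Via the K\"unneth formula $V'$ is a submodule of $H^p(X,\bC)\otimes H^q(Y,\bC)\subseteq H^n(X\times Y,\bC)$, and it is $G(X\times Y)_{\bC}$-stable because $G(X\times Y)\subseteq G(X)\times G(Y)$. Since the K\"unneth isomorphism respects Hodge bigradings, the tensor product of the chosen $(p,0)$-form in $M'$ with the chosen $(q,0)$-form in $N'$ is a nonzero $(n,0)$-form lying in $V'$, and $X\times Y\in\sA\cdot\sB$. It remains to check that $V$ and $V'$ are isomorphic as $G(A\times B\times X\times Y)_{\bC}$-modules.

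This last point is the main obstacle, and it rests on the behaviour of Hodge groups under projection to sub-products: since $G(A\times B\times X\times Y)\subseteq G(A)\times G(B)\times G(X)\times G(Y)$ acts block-diagonally on $H^1(A\times B\times X\times Y) = H^1(A)\oplus H^1(B)\oplus H^1(X)\oplus H^1(Y)$, restriction to the summand $H^1(A\times X)$ defines a homomorphism $G(A\times B\times X\times Y)\to G(A\times X)$ (its image lies in $G(A\times X)$ because the Hodge classes of $(A\times X)^m$ pull back to Hodge classes of $(A\times B\times X\times Y)^m$, all of which are fixed by $G(A\times B\times X\times Y)$), and likewise one gets $G(A\times B\times X\times Y)\to G(B\times Y)$. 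Consequently the actions of $G(A\times B\times X\times Y)_{\bC}$ on $M$ (through $G(A)_{\bC}$) and on $M'$ (through $G(X)_{\bC}$) both factor through $G(A\times X)_{\bC}$, over which $M$ and $M'$ are isomorphic; hence $M\cong M'$ as $G(A\times B\times X\times Y)_{\bC}$-modules, and similarly $N\cong N'$. Tensoring yields $M\otimes N\cong M'\otimes N' = V'$ as $G(A\times B\times X\times Y)_{\bC}$-modules. Finally $V\cong M\otimes N$ as $G(A\times B)_{\bC}$-modules, hence as $G(A\times B\times X\times Y)_{\bC}$-modules since this group acts through $G(A\times B)_{\bC}$; combining, $V\cong V'$ as $G(A\times B\times X\times Y)_{\bC}$-modules. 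This is exactly the condition in the definition of $\bC$-domination, so $A\times B$ is $\bC$-dominated by $\sA\cdot\sB$.
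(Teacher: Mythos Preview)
Your proof is correct and follows essentially the same route as the paper: use K\"unneth plus the hypothesis $G(A\times B)=G(A)\times G(B)$ to split an irreducible into an external tensor product, apply $\bC$-domination to each factor, and then tensor back together. The only difference is cosmetic: where you carefully construct projections $G(A\times B\times X\times Y)\to G(A\times X)$ and $G(A\times B\times X\times Y)\to G(B\times Y)$ and argue via factoring the action, the paper simply invokes the containment $G(A\times B\times X\times Y)\subset G(A\times X)\times G(B\times Y)$ in one line---your paragraph is precisely a justification of that containment.
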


\begin{proof}
Let $W \subset H^n(A \times B,\bC)$ be an irreducible $G(A)_{\bC} \times G(B)_{\bC}$-module.
Then $W$ is contained in a K\"{u}nneth component
$H^a(A, \bC) \otimes H^b(B, \bC)$ with $a+b=n$.
So we can write $W = U \otimes V$, where $U \subset H^a(A, \bC)$ and $V \subset H^b(B, \bC)$ are irreducible $G(A)_{\bC}$ and $G(B)_{\bC}$ modules, respectively.
By assumption there exist abelian varieties $X \in \sA$ and $Y \in \sB$,
a $G(X)_{\bC}$-submodule $U'$ of $H^m (X,\bC)$, and, a $G(Y)_{\bC}$-submodule $V'$ of $H^n (Y,\bC)$, such that $U'$ and $V'$ contain nonzero $(m,0)$ and $(n,0)$ forms respectively, $U'$ is $G(A \times X)_{\bC}$-isomorphic to $U$, and, $V'$ is $G(B \times Y)_{\bC}$-isomorphic to $V$.

Let $W' = U' \otimes V' \subset H^{m+n}(X \times Y, \bC)$.
Since $G(X \times Y)$ is a subgroup of $G(X) \times G(Y)$, we see that
$W'$ is a $G(X \times Y)_{\bC}$-submodule of $H^{m+n}(X \times Y, \bC)$.
Clearly, it contains a nonzero $(m+n,0)$-form.
Since $G(A \times B \times X \times Y)$ is a subgroup of $G(A \times X) \times G(B \times Y)$,
$U$ is isomorphic to $U'$ as a $G(A \times X)_{\bC}$-module, and,
$V$ is isomorphic to $V'$ as a $G(B \times Y)_{\bC}$-module, we see that
$W$ is isomorphic to $W'$ as a $G(A \times B \times X \times Y)_{\bC}$-module.
\end{proof}

\begin{proposition}
\label{weakproductlemma}
Let $A$ and $B$ be abelian varieties such that $G(A \times B) = G(A) \times G(B)$.
If $A$ is $\bC$-dominated by $\sA$, and $B$ is dominated by $\sB$, then, $A \times B$ is dominated by
$\sA \cdot \sB = \{ \, X \times Y \mid X \in \sA, \ Y \in \sB \, \}$.
\end{proposition}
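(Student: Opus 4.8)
The plan is to combine the arguments behind Proposition~\ref{productlemma} and Lemma~\ref{fieldlemma}. As in the proof of Lemma~\ref{fieldlemma}, it suffices to show that every irreducible Hodge structure $W$ in the cohomology of $A\times B$ admits a fully twisted Hodge structure in the cohomology of some $X\times Y$, $X\in\sA$, $Y\in\sB$, that is isomorphic to a Tate twist of $W$. First I would use the K\"unneth formula and the hypothesis $G(A\times B)=G(A)\times G(B)$ to reduce to the case where $W$ is a $G(A)\times G(B)$-submodule of $U\otimes V$ for some irreducible Hodge structures $U\subseteq H^a(A,\bQ)$ and $V\subseteq H^b(B,\bQ)$: indeed $W$ lies in a K\"unneth component $H^a(A,\bQ)\otimes H^b(B,\bQ)$, and after decomposing each factor into isotypic pieces and projecting, irreducibility of $W$ forces its image in some summand $U_i\otimes V_j$ to be injective, so $W$ may be replaced by that image (which does not affect the conclusion).

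Next I would pass to $\bC$ and write $W_{\bC}=\bigoplus_{(k,l)\in S}U_k\otimes V_l$, where $U_k$ and $V_l$ run over the irreducible constituents of $U_{\bC}$ and $V_{\bC}$. Over a number field large enough to split everything, $S$ is stable under the Galois action, and since $U$ and $V$ are $\bQ$-irreducible the Galois group acts transitively on the constituents of $U_{\bC}$ and on those of $V_{\bC}$ separately; hence both projections of $S$ are surjective. Now apply $\bQ$-domination of $B$ (which is what domination by $\sB$ means) to the irreducible $G(B)_{\bQ}$-submodule $V$ of $H^b(B,\bQ)$: this produces $Y\in\sB$, an integer $n\ge 0$, and a $G(Y)$-submodule $V'$ of $H^n(Y,\bQ)$ with $V\cong V'$ as $G(B\times Y)_{\bQ}$-modules and $(V'_{\bC})^{n,0}\ne 0$. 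Extending this isomorphism to $\bC$ matches each constituent $V_l$ of $V_{\bC}$ with a constituent $V'_l$ of $V'_{\bC}$; since the isotypic decomposition of $V'_{\bC}$ refines its Hodge decomposition, $(V'_{l_0})^{n,0}\ne 0$ for some $l_0$, and by surjectivity of the projection of $S$ there is a $k_0$ with $W_0:=U_{k_0}\otimes V_{l_0}$ a constituent of $W_{\bC}$. Applying $\bC$-domination of $A$ to $U_{k_0}$ yields $X\in\sA$, an integer $m\ge 0$, and an irreducible $G(X)_{\bC}$-submodule $U'_{k_0}$ of $H^m(X,\bC)$ with $U_{k_0}\cong U'_{k_0}$ as $G(A\times X)_{\bC}$-modules and $(U'_{k_0})^{m,0}\ne 0$.

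Put $N=m+n$ and $W''_0:=U'_{k_0}\otimes V'_{l_0}\subseteq H^m(X,\bC)\otimes H^n(Y,\bC)=H^N(X\times Y,\bC)$. As in the proof of Proposition~\ref{productlemma}, $W''_0$ is a $G(X\times Y)_{\bC}$-submodule of $H^N(X\times Y,\bC)$ containing the nonzero $(N,0)$-form $(U'_{k_0})^{m,0}\otimes(V'_{l_0})^{n,0}$, and $W''_0\cong W_0$ as $G_{\bC}$-modules, where $G:=G(A\times B\times X\times Y)$, since $G$ embeds into $G(A\times X)\times G(B\times Y)$. Then I would run the descent of Lemma~\ref{fieldlemma}: as $W''_0$ is a $G_{\bC}$-submodule of $H^N(X\times Y,\bC)$ isomorphic to the $G_{\bC}$-submodule $W_0$ of $W_{\bC}$, the space $\hom_{G_{\bC}}(W_{\bC},H^N(X\times Y,\bC))$ is nonzero, and by Zariski-density of $G(\bQ)$ in $G$ it equals $\hom_G(W,H^N(X\times Y,\bQ))\otimes_{\bQ}\bC$. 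The $(N,0)$-part of the sum of the images of the elements of $\hom_G(W,H^N(X\times Y,\bQ))$ is nonzero, because after extension to $\bC$ that sum contains $W''_0$; hence some $\varphi\in\hom_G(W,H^N(X\times Y,\bQ))$ has image containing a nonzero $(N,0)$-form. Since $W$ is irreducible, $\varphi$ is injective, so $W':=\varphi(W)$ is a Hodge structure in the cohomology of $X\times Y$ isomorphic to $W$ as a $G$-module, hence to a Tate twist of $W$ as a Hodge structure, and it is fully twisted.

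The step I expect to need the most care is the coordination carried out in the second paragraph. Unlike in Proposition~\ref{productlemma}, where one follows a single $\bC$-irreducible constituent throughout, here the $(n,0)$-form is produced only by $\bQ$-domination of $B$, so one must choose the constituent $W_0$ of $W_{\bC}$ so that this form lands in its $V$-part while its $U$-part is simultaneously handled by $\bC$-domination of $A$; it is the Galois-stability of $S$ together with the $\bQ$-irreducibility of $U$ and $V$ that makes such a choice possible. The other point to verify carefully, exactly as in Lemma~\ref{fieldlemma}, is that the final descent can be arranged so that the single $\bQ$-rational map $\varphi$ has fully twisted image.
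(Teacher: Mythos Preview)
Your argument is correct and follows the same route as the paper: isolate an irreducible $\bC$-constituent of $W_\bC$, dominate its $B$-component over $\bQ$ and its $A$-component over $\bC$, and then descend. The paper organizes the coordination differently: it fixes $W_0=U_0\otimes V_0$ first, applies $\bQ$-domination to a $\bQ$-irreducible $\tV_1$ containing $V_0$, observes that the $(d,0)$-form produced in $\tV'$ may lie in a Galois conjugate $V_1^\sigma$, and then replaces $W_0$ by $W_0^\sigma$ before $\bC$-dominating $U_0^\sigma$; this is equivalent to your surjectivity-of-projections argument for $S$. For the descent the paper takes the smallest Hodge structure $\tW$ containing $U_0'\otimes V_1'$, notes it is a primary $G(X\times Y)$-module so every $\bQ$-irreducible submodule is fully twisted, and matches $W$ with one of them; your Lemma~\ref{fieldlemma}-style $\hom$ argument is an equivalent way to produce the required $\varphi$.

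One point to tighten: the decomposition $W_\bC=\bigoplus_{(k,l)\in S}U_k\otimes V_l$ into \emph{specific} irreducible subspaces, together with Galois-stability of $S$, is only valid as written when the constituents of $U_\bC$ and $V_\bC$ occur with multiplicity one (equivalently, when $\End_{G(A)}(U)$ and $\End_{G(B)}(V)$ are fields). In general, Galois permutes isomorphism classes rather than chosen subspaces, and $W_\bC$ need not split along the particular $U_k\otimes V_l$. The fix is to index $S$ by pairs of isomorphism classes $(\pi,\tau)$; then Galois transitivity on $S$ and surjectivity of both projections hold exactly as you argue, and everything that follows only needs that $W_\bC$ contains an irreducible of type $\pi_0\otimes\tau_0$, not that it equals a specific $U_{k_0}\otimes V_{l_0}$. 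The paper sidesteps this by never attempting to decompose $W_\bC$ fully.
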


\begin{proof}
Let $W \subset H^n(A \times B,\bQ)$ be an irreducible Hodge structure.
Then $W$ is contained in a K\"{u}nneth component
$H^a(A, \bQ) \otimes H^b(B, \bQ)$ with $a+b=n$.
Let $W_0$ be an irreducible $G(A \times B)_{\bC}$-submodule of $W_{\bC}$.
Write $W_0 = U_0 \otimes V_0$, where $U_0 \subset H^a(A, \bC)$ is an irreducible $G(A)_{\bC}$-module, and,
$V_0 \subset H^b(B, \bC)$ is an irreducible $G(B)_{\bC}$-module.

Let $\tV \subset H^b (B, \bQ)$ be the smallest Hodge structure such that $V_0 \subset \tV_{\bC}$;
it is the sum of all the Galois conjugates of $V_0$.
Then $\tV$ is a primary $G(B)$-module, i.e., all irreducible submodules of $\tV$ are equivalent.
Let $\tV_1$ be an irreducible submodule of $\tV$;
then $\tV_{1,\bC}$ contains a $G(B)_{\bC}$-submodule $V_1$ equivalent to $V_0$.
Since $B$ is dominated by $\sB$, there exist $Y \in \sB$, and $\tV' \subset H^d(Y, \bQ)$ such that $\tV'$ is $G(B \times Y)$-equivalent to $\tV_1$, and, $\tV'$ contains a nonzero $(d,0)$-form.
Let $V'_1$ be an irreducible $G(Y)_{\bC}$-submodule of $\tV'_{\bC}$ such that $V'_1$ contains a nonzero $(d, 0)$-form.
We have $V'_1$ equivalent, as a $G(B \times Y)_{\bC}$-module, to a Galois conjugate $V_1^{\sigma}$ of $V_1$
for some $\sigma \in \Aut (\bC)$.
Then $W_0^{\sigma}$ and  $U_0^{\sigma} \otimes V'_1$
are equivalent as $G(A \times B \times Y)_{\bC}$-modules.

Since $A$ is $\bC$-dominated by $\sA$, there exist $X \in \sA$,
and $U'_0 \subset H^c (X,\bC)$ such that $U'_0$ is
$G(A \times X)_{\bC}$-equivalent to $U_0^{\sigma}$, and, $U'_0$ contains a nonzero $(c, 0)$-form.
Then $W_0^{\sigma}$ and $U'_0 \otimes V'_1$  are equivalent
as $G(A \times B \times X \times Y)_{\bC}$-modules.

Now $U'_0 \otimes V'_1$ is an irreducible $G(X \times Y)_{\bC}$-submodule of
$H^{c+d} (X \times Y,\bC)$ which contains a nonzero $(c+d, 0)$-form.
Let $\tW$ be the smallest Hodge structure containing $U'_0 \otimes V'_1$.
Then, $\tW$ is a primary $G(X \times Y)$-module, and any irreducible Hodge substructure of $\tW$ is fully twisted.
Since $W_0^{\sigma} \subset W_{\bC}$ and $U'_0 \otimes V'_1 \subset \tW_{\bC}$ are equivalent as
$G(A \times B \times X \times Y)_{\bC}$-modules, $W$ is equivalent to an irreducible Hodge substructure $W'$ of $\tW$.
This completes the proof since $W'$ is fully twisted.
\end{proof}

Propositions \ref{productlemma} and \ref{weakproductlemma} replace
Proposition~4.4.1 of \cite{Abdulali1997} which contains an error
(the first sentence of the proof is not correct in general).
We now reformulate part of the main theorem of \cite{Abdulali1997}.
Abelian varieties of type III are excluded here; they have been dealt with in \cite{type3ghc}.

\begin{theorem}
\label{semisimple}
Let $A$ be an abelian variety of \pel-type.
Suppose that the Hodge group of $A$ is semisimple and $A$ has no factors of type \textup{III}.
Then $A$ is $\bC$-dominated by the set of powers of itself.
The usual Hodge conjecture for $A$ implies the general Hodge conjecture for all powers of $A$.
\end{theorem}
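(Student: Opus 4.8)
The plan is to reduce, by means of Proposition~\ref{productlemma}, to the case where $A$ is simple, and then to establish the underlying representation‑theoretic fact type by type. Write $A$ up to isogeny as $\prod_i B_i^{e_i}$ with the $B_i$ simple, pairwise non‑isogenous, of type I, II or IV. Using that the Lefschetz group is multiplicative over non‑isogenous isogeny factors and that $G(B^e)$ is the diagonal image of $G(B)$ (since $H^1(B^e)=H^1(B)\otimes\bQ^e$ as Hodge structures), the hypotheses that $A$ be of \pel-type with semisimple Hodge group give $G(A)=\prod_i G(B_i^{e_i})$ and $G(B_i^{e_i}\times B_j^{e_j})=G(B_i^{e_i})\times G(B_j^{e_j})$, and they pass to each $B_i$ (again of \pel-type with semisimple Hodge group). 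It therefore suffices to prove that for every simple $B$ of one of these types which is of \pel-type with semisimple Hodge group, \emph{every} irreducible $G(B)_\bC$-module occurs, fully twisted (i.e.\ with a nonzero top‑degree form), in the cohomology of some power of $B$; granting this, each $B_i^{e_i}$ is $\bC$-dominated by its powers, Proposition~\ref{productlemma} shows $A$ and every power $A^m$ is $\bC$-dominated — hence, by Lemma~\ref{fieldlemma}, dominated — by powers of $A$, and Proposition~\ref{groprop} deduces the general Hodge conjecture for $A^m$ from the usual Hodge conjecture for the products $A^m\times A^n=A^{m+n}$, that is, for all powers of $A$.

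Now let $B$ be simple of type I/II/IV, of \pel-type with semisimple Hodge group, and put $V=H^1(B,\bQ)$. By \S\ref{groups}, $G(B)_\bC=\prod_{\alpha\in S}G_\alpha$ and $V_\bC=\bigoplus_{\alpha\in S}V_{\alpha,\bC}$, where $G_\alpha$ acts on $V_{\alpha,\bC}$ as a symplectic group through one copy (type I) or two copies (type II) of its standard representation, or as $SL(U_\alpha)$ with $V_{\alpha,\bC}\cong U_\alpha\oplus U_\alpha^{\vee}$ (type IV), and trivially on $V_{\alpha',\bC}$ for $\alpha'\neq\alpha$. The weight‑$1$ Hodge structure on $V$ determines a cocharacter $\nu=(\nu_\alpha)\colon\mathbb G_{m,\bC}\to G(B)_\bC$ acting by $z$ on $V^{1,0}$ and by $z^{-1}$ on $V^{0,1}$; the elementary point is that in $H^n(B^k,\bC)=\Lambda^n\!\big(V_\bC^{\oplus k}\big)$ the Hodge $(p,q)$-component is exactly the $\nu$-weight-$(2p-n)$ subspace, so an irreducible $G(B)_\bC$-submodule $W$ of $H^n(B^k,\bC)$ contains a nonzero $(n,0)$-form precisely when the highest $\nu$-weight occurring in $W$ equals $n$ — which is the highest $\nu$-weight of $H^n(B^k,\bC)$ itself. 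Since $V_\bC^{\otimes n}$ is a $G(B)_\bC$-stable and $\nu$-stable direct summand of $\Lambda^n(V_\bC^{\oplus n})$, and an irreducible $G(B)_\bC$-module has the form $W=\bigotimes_\alpha W_\alpha$ with $W_\alpha$ an irreducible $G_\alpha$-module and highest $\nu$-weight $w(W)=\sum_\alpha w(W_\alpha)$, it is enough to embed each $W_\alpha$ into $V_{\alpha,\bC}^{\otimes w(W_\alpha)}$ and tensor the embeddings together inside $V_\bC^{\otimes w(W)}\subset H^{w(W)}(B^{w(W)},\bC)$.

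For types I and II, $W_\alpha=W_\lambda$ is an irreducible representation of a symplectic group $Sp_{2d}$, the cocharacter $\nu_\alpha$ is conjugate to the Siegel coweight, $w(W_\lambda)=|\lambda|$, and $W_\lambda$ occurs in the $|\lambda|$-th tensor power of the standard representation — hence in $V_{\alpha,\bC}^{\otimes|\lambda|}$ — by the classical construction of Weyl modules. For type IV, $W_\alpha=W_\lambda$ is an irreducible representation of $SL(U_\alpha)$ with $\dim U_\alpha=m$; since $\nu_\alpha$ must land in the semisimple group $SL(U_\alpha)$, the unitary signature at $\alpha$ is forced to be balanced, so $m=2\ell$ and $\nu_\alpha=\operatorname{diag}(zI_\ell,z^{-1}I_\ell)$. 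Writing $\lambda=\sum_{j=1}^{m-1}c_j\omega_j$ with $c_j\geq 0$, one has $w(W_\lambda)=\langle\lambda,\nu_\alpha\rangle=\sum_{j\leq\ell}j\,c_j+\sum_{j>\ell}(m-j)c_j$; on the other hand $W_\lambda$ embeds into $\bigotimes_j(\Lambda^jU_\alpha)^{\otimes c_j}$, and $\Lambda^jU_\alpha$ embeds into $U_\alpha^{\otimes j}$ for $j\leq\ell$ and, via $\Lambda^jU_\alpha\cong(\Lambda^{m-j}U_\alpha)^{\vee}$, into $(U_\alpha^{\vee})^{\otimes(m-j)}$ for $j>\ell$, placing $W_\lambda$ inside $(U_\alpha\oplus U_\alpha^{\vee})^{\otimes N}=V_{\alpha,\bC}^{\otimes N}$ with $N=\sum_{j\leq\ell}j\,c_j+\sum_{j>\ell}(m-j)c_j$. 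Matching this $N$ with $w(W_\lambda)$ is a short double‑counting identity, and verifying that the copy of $W_\lambda$ so produced genuinely meets the top $\nu_\alpha$-weight line — so that it really carries a nonzero $(N,0)$-form — is the one point that requires care. The symplectic cases being essentially classical invariant theory, this type‑IV bookkeeping (together with the reduction arguments of the first paragraph) is where I expect the real work of the proof to lie.
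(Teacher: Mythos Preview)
Your reduction to simple factors via Proposition~\ref{productlemma} and the place-by-place analysis of $G(B)_{\bC}=\prod_\alpha G_\alpha$ is exactly the paper's approach; the paper simply cites Cases~1 and~2 of \cite{Abdulali1997}*{Theorem~5.1} for the construction of $W'_\alpha\subset\bigwedge^{n_\alpha}V_{\alpha,\bC}^{m_\alpha}$ containing a nonzero $(n_\alpha,0)$-form, whereas you spell this out through the Hodge cocharacter $\nu$ and highest-weight theory. Your observation that semisimplicity of $G(B)$ forces the type~IV signature to be balanced ($p_\alpha=q_\alpha$) is correct and is precisely the special case to which \cite{Abdulali1997} applies.

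Two remarks. First, the ``point that requires care'' you flag at the end is in fact automatic: the $\nu$-weight decomposition is intrinsic to the abstract $G(B)_{\bC}$-module, so once you know the highest $\nu$-weight of $W_\lambda$ equals $N$ and you have embedded $W_\lambda$ into $H^N$, \emph{any} copy of $W_\lambda$ there meets the $(N,0)$-line---no further verification is needed. Second, there is a genuine gap in your final deduction: Proposition~\ref{groprop} requires the usual Hodge conjecture for \emph{all powers} $A^{m+n}$, but the hypothesis only gives it for $A$ itself. The paper closes this gap by invoking \cite{Abdulali1999}*{Theorem~3.1, p.~671}, which shows (under the present hypotheses on $G(A)$) that the usual Hodge conjecture for $A$ implies it for every power of $A$; you should cite or reprove this.
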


\begin{proof}[Sketch of proof]
$A$ is isogenous to a product $A_1^{n_1} \times A_2^{n_2} \times \dots \times A_{\ell}^{n_{\ell}}$
where the $A_i$ are pairwise nonisogenous abelian varieties.
By the multiplicativity of the Lefschetz group (Murty \cite{Murty}*{Lemma 2.1, p.~298}), we have
\[
	L(A) = L(A_1) \times L(A_2) \times \dots \times L(A_{\ell}).
\]
Since
\[
	G(A) \subset G(A_1) \times G(A_2) \times \dots \times G(A_{\ell})
\]
and $G(A)$ equals the derived group of $L(A)$, we conclude that each $A_i$ is of \pel-type, and,
\[
	G(A) = G(A_1) \times G(A_2) \times \dots \times G(A_{\ell}).
\]
Lemma~\ref{fieldlemma} and Proposition~\ref{productlemma} now imply that it is enough to prove the theorem when $A$ is a power of a simple abelian variety $A_0$.

Let $G = G(A) = G(A_0)$, let $D$ be the endomorphism algebra of $A_0$, $E$ the center of $D$, and $F$ the maximal real subfield of $E$.
Let $S$ be the set of embeddings of $F$ into $\bR$.
From \eqref{Sdecomp} we see that $G(\bR) = \prod_{\alpha \in S} G_{\alpha}$, and,
\(
	H^1(A_0,\bR) = \bigoplus_{\alpha \in S} V_{\alpha},
\)
where each $V_{\alpha}$ is a real Hodge substructure of  $H^1(A_0,\bR)$ on which
$G_{\gamma}$ acts trivially for $\gamma \neq \alpha$.

Now let $W$ be any irreducible $G_{\bC}$-submodule of the cohomology of $A$.
Then $W$ is equivalent to a representation $\bigotimes_{\alpha \in S} W_{\alpha}$, where
$W_{\alpha}$ is an irreducible representation of $G_{\alpha,\bC}$.
In Cases 1 and 2 of the proof of \cite{Abdulali1997}*{Theorem~5.1} we 
showed that there exist $G_{\alpha,\bC}$-submodules
\[
	W'_{\alpha} \subset
	\bigwedge^{n_{\alpha}} V_{\alpha,\bC}^{m_{\alpha}} \subset
	H^{n_{\alpha}} (A_0^{m_{\alpha}},\bC)
\]
for some $n_{\alpha}$, $m_{\alpha}$, such that $W_{\alpha}$ and 
$W'_{\alpha}$ are equivalent, and, $W'_{\alpha}$ contains a nonzero
$(n_{\alpha}, 0)$-form.
Let $n = \sum_{\alpha} n_{\alpha}$, $m = \sum_{\alpha} m_{\alpha}$, 
and, $W' = \bigotimes_{\alpha} W'_{\alpha}$.
Then, $W' \subset \bigwedge^n V_{\bC}^m = H^n (A_0^m, \bC)$ is 
equivalent to $W$ and contains a nonzero $(n,0)$-form.
This shows that $A$ is $\bC$-dominated by the set of powers of  itself.

To complete the proof we remark that the usual Hodge conjecture for $A$
implies the usual Hodge conjecture for all powers of $A$.
This follows from \cite{Abdulali1999}*{Theorem~3.1, p.~671}.
\end{proof}

\begin{remark}
In \cites{type3ghc, Abdulali2004, Abdulali2005} we have proved the general Hodge conjecture for various abelian varieties which are dominated, but not $\bC$-dominated, by certain classes of abelian varieties.
Proposition \ref{weakproductlemma} allows us to deduce the general Hodge conjecture for the product of one of these abelian varieties with an abelian variety satisfying the hypotheses of Theorem \ref{semisimple}.
\end{remark}

\section{Abelian Varieties of Type IV}

Let $A$ be an abelian variety of type IV.
If $G(A)$ is semisimple, then we have seen (Theorem~\ref{semisimple}) that $A$ is $\bC$-dominated by powers of itself.
At the other extreme, if $G(A)$ is commutative, then, $A$ is of CM-type, and we have shown in \cite{Abdulali2005} that $A$ is dominated by abelian varieties of CM-type.
We shall now extend these results to some abelian varieties of type IV whose Hodge groups are neither semisimple nor commutative. We begin with a definition.

\begin{definition}
\label{selfdominated}
We say that an abelian variety $A$ is \emph{weakly self-dominated} if, given any nontrivial irreducible representation $\rho$ of $G'(A)(\bC)$, there exists $V_{\rho}$ such that
\begin{itemize}
	\item
		$V_{\rho}$ is an $L(A)(\bC)$-submodule of $H^{c_{\rho}}(A^{d_{\rho}}, \bC)$
		for some positive integers $c_{\rho}$, $d_{\rho}$;
	\item
		the action of  $G'(A)(\bC)$ on $V_{\rho}$ is equivalent to $\rho$;
	\item
		for each $\sigma \in \Aut (\bC)$, the conjugate $(V_{\rho})^{\sigma}$
		contains a nonzero $(c_{\rho},0)$-form.
\end{itemize}
\end{definition}

\begin{remark}
In Theorem~\ref{mainexample} below, we show that certain type IV abelian varieties of \pel-type are weakly self-dominated.
In Theorem~\ref{weakimpliesdomination} we show that if $A$ is weakly self-dominated, then $A$ is dominated by abelian varieties of the form $A^n \times B$ where $B$ is of CM-type.
In Theorem~\ref{ghc} we apply these results to prove the general Hodge conjecture for some of these abelian varieties.
\end{remark}

\begin{lemma}
\label{selfdominationlemma}
Any abelian variety of CM-type is weakly self-dominated.
If $A$ is weakly self-dominated, then, so is any power of $A$.
If $A$ and $B$ are weakly self-dominated abelian varieties such that
$G'(A \times B) = G'(A) \times G'(B)$, then $A \times B$ is also weakly self-dominated.
\end{lemma}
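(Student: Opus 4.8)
The plan is to treat the three assertions in turn. The first is essentially a tautology: an abelian variety of CM-type has commutative Hodge group, so $G'(A)$ is trivial, there are then no nontrivial irreducible representations of $G'(A)(\bC)$, and the defining condition is satisfied vacuously.

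For a power $B = A^n$, I would use that a power contributes a single factor to the multiplicativity of both groups, so that $G(B) = G(A)$, hence $G'(B) = G'(A)$, and $L(B) = L(A)$, each acting diagonally on $H^1(B) = H^1(A)^{\oplus n}$. Given a nontrivial irreducible representation $\rho$ of $G'(B)(\bC) = G'(A)(\bC)$, take the witness $V_\rho \subseteq H^{c_\rho}(A^{d_\rho},\bC)$ furnished by the weak self-domination of $A$ and pull it back along the projection $p \colon B^{d_\rho} = A^{n d_\rho} \to A^{d_\rho}$ onto the first $d_\rho$ coordinates. Then $p^*$ is an injective morphism of Hodge structures of the same weight; since $p$ respects the product structure, $p^*$ intertwines the diagonal actions of $L(A) = L(B)$, and of $G'(A) = G'(B)$, on the two cohomology groups. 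Hence $p^* V_\rho$ is an $L(B)(\bC)$-submodule of $H^{c_\rho}(B^{d_\rho},\bC)$ on which $G'(B)(\bC)$ acts as $\rho$, and, because $p^*$ is defined over $\bQ$ and preserves the Hodge filtration, $(p^* V_\rho)^{\sigma} = p^*\!\left((V_\rho)^{\sigma}\right)$ still contains a nonzero $(c_\rho,0)$-form for every $\sigma \in \Aut(\bC)$. This gives a witness for $B$.

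For a product $A \times B$ with $G'(A \times B) = G'(A) \times G'(B)$, the point is that the Lefschetz group is always diagonal along common isogeny factors, so it preserves the decomposition $H^1(A \times B) = H^1(A) \oplus H^1(B)$ and $L(A \times B) \subseteq L(A) \times L(B)$ (the inclusion being strict precisely when $A$ and $B$ share a factor, which by the hypothesis is then of CM-type, and this causes no difficulty). Since also $G'(A \times B) \subseteq G'(A) \times G'(B)$, an equality here, every irreducible representation $\rho$ of $G'(A \times B)(\bC)$ is an external tensor product $\rho = \rho_A \otimes \rho_B$ with $\rho_A$, $\rho_B$ irreducible and not both trivial. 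For each nontrivial factor take the witness supplied by weak self-domination, and for a trivial factor take $H^0(A,\bC) \subseteq H^0(A^1,\bC)$ (an $L(A)(\bC)$-submodule on which $G'(A)$ acts trivially, carrying a conjugation-invariant $(0,0)$-form); after pulling both back, by the $L$-equivariant projections as above, to a common power $d = \max(d_{\rho_A},d_{\rho_B})$, set
\[
	V_\rho := V_{\rho_A} \otimes V_{\rho_B} \subseteq H^{c_{\rho_A}}(A^{d},\bC) \otimes H^{c_{\rho_B}}(B^{d},\bC) \subseteq H^{c_\rho}((A\times B)^{d},\bC), \qquad c_\rho := c_{\rho_A} + c_{\rho_B}.
\]
Being an $L(A)(\bC) \times L(B)(\bC)$-submodule, $V_\rho$ is a fortiori an $L(A \times B)(\bC)$-submodule; the group $G'(A \times B)(\bC) = G'(A)(\bC) \times G'(B)(\bC)$ acts on it as $\rho_A \otimes \rho_B = \rho$; and the K\"{u}nneth product of $(c_{\rho_A},0)$- and $(c_{\rho_B},0)$-forms in $(V_{\rho_A})^{\sigma}$ and $(V_{\rho_B})^{\sigma}$ is a $(c_\rho,0)$-form in $(V_\rho)^{\sigma}$, so $V_\rho$ witnesses the weak self-domination of $A \times B$ for $\rho$.

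I do not expect a serious obstacle: the substance lies in the two functoriality facts used above --- that pullback along a projection of powers is a morphism of Hodge structures intertwining the diagonal Lefschetz- and Hodge-group actions, and that $L(A \times B) \subseteq L(A) \times L(B)$ --- the latter being exactly what lets the tensor-product construction carry an $L(A \times B)(\bC)$-module structure even when $A$ and $B$ have a common (CM) factor. The rest is bookkeeping: aligning the integers $c_\rho$, $d_\rho$ and accommodating a trivial tensor factor through $H^0$.
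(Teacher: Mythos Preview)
Your proof is correct and follows essentially the same approach as the paper's: the CM case is vacuous, and for the product one writes $\rho = \rho_A \otimes \rho_B$ and takes the external tensor product of the individual witnesses (or the witness for the nontrivial factor alone when the other is trivial). Your treatment is more detailed than the paper's terse proof---in particular you explicitly align the exponents $d_{\rho_A}$, $d_{\rho_B}$ via pullback along a projection so that the tensor product lands in the cohomology of a genuine power of $A \times B$, and you handle the trivial tensor factor via $H^0$ rather than leaving the K\"unneth embedding implicit---but the underlying idea is identical.
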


\begin{proof}
The first statement is trivial.
The second statement is immediate from the definition.
For the third statement, note that any irreducible representation of $G'(A \times B)(\bC)$ is of the form $\rho \otimes \tau$, where $\rho$ is an irreducible representation of $G'(A)(\bC)$ and $\tau$ is an irreducible representation of $G'(B)(\bC)$.
Let
\[
	V_{\rho \otimes \tau} =
		\begin{cases}
			V_{\rho} \otimes V_{\tau} &\text{if both $\rho$ and $\tau$ are nontrivial;} \\
			V_{\rho} &\text{if $\rho$ is nontrivial but $\tau$ is trivial;}\\
			V_{\tau} &\text{if $\tau$ is nontrivial but $\rho$ is trivial.}
		\end{cases}
\]
\end{proof}

\begin{theorem}
\label{mainexample}
Let $A$ be an abelian variety of \pel-type such that each simple factor of $A$ is of type \textup{IV}.
Then we can write $G'(A)(\bR) \cong \prod_{\alpha \in S} SU(p_{\alpha}, q_{\alpha})$.
Assume that for each $\alpha \in S$ we have $\vert p_{\alpha} - q_{\alpha} \vert = 1$.
Then $A$ is weakly self-dominated.
\end{theorem}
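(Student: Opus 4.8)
The plan is to reduce the statement to the case of a single simple factor and then, for each irreducible representation of $G'$, to build the required module inside the cohomology of a power of $A_0$ as a Cartan product of exterior powers of $H^1$, arranging the highest weight vectors to be holomorphic forms in a way that survives all conjugations. First the reduction: writing $A$ up to isogeny as $A_1^{n_1}\times\dots\times A_\ell^{n_\ell}$ with the $A_i$ pairwise nonisogenous and simple of type~IV, one argues as in the proof of Theorem~\ref{semisimple}, via the multiplicativity of the Lefschetz group and the \pel-hypothesis, that each $A_i$ is of \pel-type and that $G'(A)=\prod_i G'(A_i^{n_i})=\prod_i G'(A_i)$; each $G'(A_i)(\bR)$ is then a product of factors $SU(p_\alpha,q_\alpha)$ with $|p_\alpha-q_\alpha|=1$, so each $A_i$ again satisfies the hypothesis, and by the second and third parts of Lemma~\ref{selfdominationlemma} it suffices to treat a single simple $A_0$. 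So assume $A=A_0$ is simple of type~IV with $|p_\alpha-q_\alpha|=1$ for all $\alpha\in S$; then $m:=p_\alpha+q_\alpha$ is odd and independent of $\alpha$, and $\min(p_\alpha,q_\alpha)=(m-1)/2$. The asserted description $G'(A_0)(\bR)\cong\prod_\alpha SU(p_\alpha,q_\alpha)$ follows from \eqref{Sdecomp} in the type~IV case together with the fact that $G'(A_0)$ is the semisimple part of $L(A_0)^0$. Over $\bC$ one has $L(A_0)(\bC)=\prod_{\alpha\in S}GL_m(\bC)$, $G'(A_0)(\bC)=\prod_{\alpha\in S}SL_m(\bC)$, and $V_{\alpha,\bC}=T_\alpha\oplus T_\alpha^{*}$, with $T_\alpha$ the standard module of the $\alpha$-factor $GL_m(\bC)$; the Hodge structure may be normalised so that $\dim T_\alpha^{1,0}=p_\alpha$ and $\dim (T_\alpha^{*})^{1,0}=q_\alpha$, whence $\dim V_{\alpha,\bC}^{1,0}=m$.

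Given an irreducible representation $\rho_\alpha$ of the $\alpha$-factor $SL_m(\bC)$ with highest weight $\sum_{i=1}^{m-1}a_i\omega_i$, I would realise each occurring fundamental weight by
\[
W_i=\begin{cases}
\bigwedge^{i}T_\alpha\subset H^{i}(A_0,\bC), & i\le (m-1)/2,\\
\bigwedge^{m-i}T_\alpha^{*}\subset H^{m-i}(A_0,\bC), & i\ge (m+1)/2;
\end{cases}
\]
in either case $W_i$ is an irreducible $GL_m(\bC)$-module whose highest weight restricts to $\omega_i$ on $SL_m(\bC)$ (using $\bigwedge^{m-i}\bC^m\cong(\bigwedge^{i}\bC^m)^{*}$ for $SL_m$), and the exterior degree $d_i$ occurring is $\le (m-1)/2\le\min(p_\alpha,q_\alpha)$. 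Fixing a basis $e_1,\dots,e_m$ of $T_\alpha$ with $e_1,\dots,e_{p_\alpha}$ spanning $T_\alpha^{1,0}$, so that $e_{p_\alpha+1}^{*},\dots,e_m^{*}$ span $(T_\alpha^{*})^{1,0}$, one checks that, for the Borel subgroup fixing this basis, the highest weight vector $v_i$ of $W_i$ is a wedge of vectors of $V_{\alpha,\bC}^{1,0}$, hence a nonzero $(d_i,0)$-form. Let $V_{\rho_\alpha}$ be the Cartan component of $\bigotimes_i W_i^{\otimes a_i}\subset H^{c_\alpha}(A_0^{d_\alpha},\bC)$, where $c_\alpha=\sum_i a_i d_i$ and $d_\alpha=\sum_i a_i$; a count of determinant degrees shows $V_{\rho_\alpha}$ is a $GL_m(\bC)$-submodule, hence an $L(A_0)(\bC)$-submodule (the remaining factors acting trivially on the $V_\alpha$-isotypic part), on which $G'(A_0)(\bC)$ acts through $\rho_\alpha$, and $\bigotimes_i v_i^{\otimes a_i}$ is a nonzero $(c_\alpha,0)$-form in it. For a nontrivial irreducible $\rho=\bigotimes_\alpha\rho_\alpha$ of $G'(A_0)(\bC)$, set $V_\rho=\bigotimes_{\alpha:\,\rho_\alpha\neq 1}V_{\rho_\alpha}\subset H^{c_\rho}(A_0^{d_\rho},\bC)$ with $c_\rho=\sum_\alpha c_\alpha$, $d_\rho=\sum_\alpha d_\alpha$; then $G'(A_0)(\bC)$ acts on $V_\rho$ through $\rho$, and the product of the $(c_\alpha,0)$-forms is a nonzero $(c_\rho,0)$-form.

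There remains the third condition of Definition~\ref{selfdominated}: for every $\sigma\in\Aut(\bC)$, $(V_\rho)^\sigma$ should contain a nonzero $(c_\rho,0)$-form. First, the hypothesis passes to conjugates: $\Aut(\bC)$ permutes the places of $F$ and the pairs of embeddings of $E$ over them, and comparison with de Rham cohomology shows that the multiset $\{\{p_\alpha,q_\alpha\}:\alpha\in S\}$ — in particular the property that every $|p_\alpha-q_\alpha|=1$ — is unchanged by conjugation, as is $m$. Since $\bigwedge^{\bullet}$, $\otimes$ and the formation of the Cartan component are $\Aut(\bC)$-equivariant, $(V_{\rho_\alpha})^\sigma$ is again the Cartan component of a tensor product of exterior powers, of the same degrees $d_i\le(m-1)/2$, of a standard module $T'$ (or its dual) whose $(1,0)$- and $(0,1)$-parts both have dimension $\ge(m-1)/2$; choosing a basis of $T'$ adapted to its Hodge structure as before, the relevant highest weight vectors are again $(d_i,0)$-forms, so $(V_{\rho_\alpha})^\sigma$ contains a nonzero $(c_\alpha,0)$-form, and tensoring over $\alpha$ yields one in $(V_\rho)^\sigma$.

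I expect the main obstacle to be the choice of the $W_i$ that is simultaneously valid for $A_0$ and all of its conjugates, for this is exactly where $|p_\alpha-q_\alpha|=1$ is used: the fundamental weights must be partitioned into those with $i\le(m-1)/2$ (realised in exterior powers of $T_\alpha$) and those with $i\ge(m+1)/2$ (realised in exterior powers of $T_\alpha^{*}$), with no index omitted — and this already fails once $|p_\alpha-q_\alpha|\ge3$, since then some exterior degree $d_i$ would exceed $\min(p',q')$ for some conjugate signature $(p',q')$. The other delicate point is the $\Aut(\bC)$-bookkeeping: that it is enough for the conjugate module $(V_\rho)^\sigma$ to contain \emph{some} $(c_\rho,0)$-form (rather than the conjugate of the chosen one), together with the transformation law of the signature under $\Aut(\bC)$.
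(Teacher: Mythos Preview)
Your proposal is correct and follows essentially the same route as the paper: reduce to the simple case via Lemma~\ref{selfdominationlemma}, realize each fundamental weight $\mu_i$ of $SL_m(\bC)$ inside $\bigwedge^i Y_\alpha$ (for $i\le (m-1)/2$) or $\bigwedge^{m-i}\overline{Y}_\alpha$ (for $i\ge (m+1)/2$) so that the highest weight vector is a holomorphic form of top degree, take Cartan products to reach an arbitrary highest weight, and tensor over $\alpha\in S$. The paper phrases the $\Aut(\bC)$-condition as the single observation that the set $\{V_\rho\}$ is $\Aut(\bC)$-stable (hence each conjugate is again some $V_{\rho'}$, already known to contain a $(c,0)$-form) rather than recomputing for each conjugate as you do, but this is the same argument in different words.
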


\begin{proof}
Thanks to Lemma \ref{selfdominationlemma}, we may assume that $A$ is simple.
Let $L = L(A)$, $G = G(A)$, $G' = G'(A)$, and, $V = H^1 (A, \bQ)$.
Recall from \eqref{lefschetzproduct} that $L(A) = \Res_{F/{\bQ}} U(T)$,
where $U(T)$ is a unitary group over $F$, the maximal totally real subfield of the center $E$ of $D(A)$.
Let $S$ be the set of embeddings of $F$ into $\bR$.
Then \eqref{Sdecomp} we have $L(\bR) = \prod_{\alpha \in S} L_{\alpha}$, and,
$V_{\bR} = \bigoplus_{\alpha \in S} V_{\alpha}$,
so that $L_{\alpha}$ acts trivially on $V_{\alpha'}$ unless $\alpha = \alpha'$.
Each $L_{\alpha}$ is a unitary group $U(p_{\alpha}, q_{\alpha})$, with
$p_{\alpha} + q_{\alpha} = m := \dim_E V$.
If $m=1$, then $A$ is of CM-type, $G'(A)$ is trivial, and there is nothing to prove; we may therefore assume $m \geq 3$, so that all $p_{\alpha}$ and $q_{\alpha}$ are positive.

We have $L_{\alpha, \bC} \cong GL_m(\bC)$.
As explained in \cite{Abdulali1997}*{p.~351},
$V_{\alpha, \bC} = Y_{\alpha} \oplus \overline{Y}_{\alpha}$, where $Y_{\alpha}$ and its complex conjugate $\overline{Y}_{\alpha}$ are  $L_{\alpha, \bC}$-modules,
$GL_m(\bC)$ acts on $Y_{\alpha}$ as the standard representation, and on $\overline{Y}_{\alpha}$ as the contragredient.
$Y_{\alpha}$ is the direct sum of a $p_{\alpha}$-dimensional space of $(1,0)$-forms and a $q_{\alpha}$-dimensional space of $(0,1)$-forms.
 $\overline{Y}_{\alpha}$ is the direct sum of a $q_{\alpha}$-dimensional space of $(1,0)$-forms and a $p_{\alpha}$-dimensional space of $(0,1)$-forms.
Choose a basis $\left\{ u_1, \dotsc, u_m \right\}$ of $Y_{\alpha}$ such that
$u_1$, \dots, $u_{p_{\alpha}}$ are $(1,0)$-forms and $u_{p_{\alpha}+1}$, \dots, $u_m$ are $(0,1)$-forms.
Then $\left\{ \overline{u}_1, \dots, \overline{u}_m \right\}$ is a basis of $\overline{Y}_{\alpha}$.
Observe that the set $\bigcup_{\alpha \in S} \left\{ Y_{\alpha}, \overline{Y}_{\alpha} \right\}$ is invariant under the action of $\Aut (\bC)$.

Let $g$ be the element of $GL_m(\bC)$ which transposes $u_k$ and $u_{m-k+1}$ for each $k$.

Let $\mu_1$, \dots, $\mu_{m-1}$ be the fundamental weights of $SL_{m}(\bC)$, i.e.,
$\mu_k$ is the highest weight of the representation $\bigwedge^k (\St)$, where $(\St)$ denotes the standard representation of $SL_{m}(\bC)$ on $\bC^m$.
For $1 \leq k < \frac{m}{2}$, $V_{\alpha, k} := \bigwedge^k Y_{\alpha} \subset H^k(A,\bC)$
is an $L_{\alpha, \bC}$-module;
it is irreducible as a $G'_{\alpha,\bC}$-module, and has highest weight $\mu_k$.
It contains the $(k,0)$-form
\[
	w_k := u_1 \wedge \dots \wedge u_k,
\]
as well as the $(0,k)$-form
\[
	w'_k := g(w_k) = u_m \wedge \dots \wedge u_{m-k+1}.
\]

For $\frac{m}{2} < k < m$,  $V_{\alpha, k} := \bigwedge^{m-k} \overline{Y}_{\alpha} \subset H^{m-k}(A,\bC)$ is an $L_{\alpha,\bC}$-module;
it is irreducible as a $G'_{\alpha,\bC}$-module, and has highest weight $\mu_k$.
It contains the $(m-k,0)$-form
\[
	w_k := \overline{u}_m \wedge \dots \wedge \overline{u}_{k+1},
\]
as well as the $(0,m-k)$-form
\[
	w'_k := g(w_k) = \overline{u}_1 \wedge \dots \wedge \overline{u}_{m-k}.
\]
Let
\[
	k' =
		\begin{cases}
			k, & \text{if } k < \frac{m}{2}; \\
			m-k, & \text{if } k >\frac{m}{2}.
		\end{cases}
\]
Thus for each $k = 1$, \dots, $m-1$, we have an $SL_m(\bC)$-irreducible module $V_{\alpha, k}$ in $\bigwedge^{k'} V_{\alpha, \bC}$, such that $V_{\alpha, k}$ contains a nonzero $(k',0)$ form $w_k$ and a nonzero $(0,k')$-form $w'_k$, and, the highest weight of $SL_m(\bC)$ on $V_{\alpha, k}$ is $\mu_k$.
Note that in each case $w_k$ is a vector of highest weight, while $w'_k$ is a vector of lowest weight.
Observe that the set
$\, \left\{ V_{\alpha, k} \mid \alpha \in S, \ 1 \leq k < m \right\} \,$
is invariant under the action of $\Aut (\bC)$.

Let $j$, $k$ be positive integers with $1 \leq k < m$.
Then $S^j V_{\alpha, k}$, the symmetric tensors on $V_{\alpha, k}$, give a represention of $SL_m(\bC)$ with highest weight $j\mu_k$, and highest weight vector $(w_k)^j$.
Let $V^j_{\alpha,k}$ be the $SL_m(\bC)$-module generated by $(w_k)^j$.
The highest weight vector in $V^j_{\alpha, k}$ is $(w_k)^j$ which is a $(jk',0)$-form.
The lowest weight vector in $V^j_{\alpha, k}$ is $g((w_k)^j) = (w'_k)^j$ which is a $(0, jk')$-form.
Thus $V^j_{\alpha, k}$ is an irreducible representation with highest weight $j\mu_k$ which contains both the  $(jk',0)$-form $(w_k)^j$ and the $(0,jk')$-form $(w'_k)^j$.
Observe that the set
\[
	\left\{ \, V^j_{\alpha, k} \mid \alpha \in S, \ 1 \leq k < m, \ j > 0 \, \right\}
\]
is invariant under the action of $\Aut (\bC)$.

Any irreducible representation $\pi$ of $SL_{m}(\bC)$ has highest weight
\[
	\mu = a_1\mu_1 + \dots + a_{m-1}\mu_{m-1}
\]
where the $a_j$ are nonnegative integers. Let
\[
	a = \sum_{k=1}^{m-1} k'a_k, \qquad b = \sum_{k=1}^{m-1} a_k.
\]
Then the representation $\bigotimes_{k=1}^{m-1} S^{a_k} {V_{\alpha, k}} \subset \bigwedge^a V_{\alpha, \bC}^b$ has highest weight $\mu$.
The vector $v_{\mu} := \bigotimes_{k=1}^{m-1} (w_k)^{a_k}$ generates an irreducible submodule $V_{\mu}^{\alpha}$ which has highest weight $\mu$.
Note that $V_{\mu}^{\alpha}$ contains both the nonzero $(a,0)$-form  $v_{\mu}$
and the nonzero $(0,a)$-form $g(v_{\mu})$.
Observe that the set
\[
	\bigg\{ \, V_{\mu}^{\alpha} \ \big\vert \ \alpha \in S, \ 
	\mu = \sum_{i=1}^{m-1} a_i \mu_i, \ a_i \geq 0 \,  \bigg\}
\]
is invariant under the action of $\Aut (\bC)$.

Any irreducible representation $\rho$ of $G'(\bC)$ is of the form
$\rho = \bigotimes_{\alpha \in S} \pi_{\alpha}$, where $ \pi_{\alpha}$ is an irreducible representation of $G'_{\alpha, \bC} \cong SL_m(\bC)$.
Let $V_{\rho} = \bigotimes_{\alpha \in S} V_{\pi_\alpha}$.
Then $V_{\rho}$ is an irreducible submodule of some $H^c(A^d, \bC)$ on which $G'_{\alpha, \bC}$ acts as $\rho$, and which contains both nonzero $(c,0)$-forms and nonzero $(0,c)$-forms.
Observe that the set
\[
	\left\{ V_{\rho} \mid \rho \text{ a nontrivial irreducible representation of } G'(\bC)  \right\}
\]
is invariant under the action of $\Aut (\bC)$, so every Galois conjugate of $V_{\rho}$ contains a nonzero $(c,0)$-form.
\end{proof}

\begin{theorem}
\label{weakimpliesdomination}
Let $A$ be a weakly self-dominated abelian variety of \pel-type, such that each simple factor of $A$ is of type \textup{IV}.
Then, $A$ is dominated by the set of abelian varieties of the form $A^n \times B$, where $n$ is a positive integer, and $B$ is a product of CM abelian varieties with CM by subfields of $D(A)$.
\end{theorem}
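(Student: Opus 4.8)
I would model the argument on the proof of Proposition~\ref{weakproductlemma}, using weak self-domination to dispose of the semisimple part of $G(A)$ and the main theorem of~\cite{Abdulali2005} to dispose of its central torus. There is no reduction to $A$ simple here, since domination of products (Propositions~\ref{productlemma}, \ref{weakproductlemma}) requires $G(A\times B)=G(A)\times G(B)$, which may fail when $A$ is a product of its own simple factors; so I work with $A$ directly. Since $A$ is of \pel-type with all simple factors of type~IV, the Lefschetz group is a product of restrictions of scalars of unitary groups as in~\eqref{lefschetzproduct}, $G'(A)$ is the semisimple part of $G(A)$, and $G(A)=G'(A)\cdot Z$ where $Z=Z(G(A))^{0}$ is a torus. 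Let $W$ be an irreducible Hodge structure of weight $n$ in $H^{\bullet}(A,\bQ)$; replacing $W$ by a Tate twist (which is allowed), I may assume $W$ is fully twisted, and I fix an irreducible $G(A)_{\bC}$-submodule $W_{0}\subset W_{\bC}$ containing a nonzero $(n,0)$-form.

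Since $G'(A)$ is normal in $G(A)$ with torus quotient, $W_{0}\big|_{G'(A)_{\bC}}$ is isotypic of type $\rho$ for a single irreducible representation $\rho$ of $G'(A)_{\bC}$ (all Clifford constituents being isomorphic because $Z$ is central). Let $V_{\rho}$ be the module furnished by Definition~\ref{selfdominated} (taken trivial, with $c_{\rho}=0$, if $\rho$ is trivial). Then $\operatorname{Hom}_{G'(A)_{\bC}}(V_{\rho},W_{0})$ is a $G(A)_{\bC}$-submodule of $\operatorname{Hom}_{\bC}(V_{\rho},W_{0})$, hence carries an induced Hodge structure; it is one-dimensional (because $W_{0}$ is $G(A)_{\bC}$-irreducible and $G(A)/G'(A)$ is a torus), and $G'(A)_{\bC}$ acts trivially on it, so it is a Hodge character $\bC_{\chi}$ of $Z_{\bC}$. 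The evaluation map $V_{\rho}\otimes\bC_{\chi}\to W_{0}$ is a $G(A)_{\bC}$-equivariant morphism of Hodge structures between irreducibles, hence an isomorphism. Comparing Hodge types, and using that $V_{\rho}$ contains a $(c_{\rho},0)$-form while $W_{0}$ contains an $(n,0)$-form, one finds $\bC_{\chi}$ is purely of type $(n-c_{\rho},0)$.

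The heart of the matter is the realization of $\bC_{\chi}$. The torus $Z$ is, up to isogeny, a subtorus of $\prod_{i}\Res_{E_{i}/\bQ}\mathbb{G}_{m}$, where the $E_{i}$ are the centers of the endomorphism algebras of the simple factors of $A$; these are CM fields contained in $D(A)$. For a simple factor of type~IV this follows from~\eqref{lefschetzproduct}: the connected center of $\Res_{F/\bQ}U(T)$ is the restriction of scalars from $F$ to $\bQ$ of the norm-one torus of $E/F$, a subtorus of $\Res_{E/\bQ}\mathbb{G}_{m}$, and the \pel-condition forces the connected center of $G(A)$ into it; the general case follows by projecting to the simple factors. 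Consequently the $\Aut(\bC)$-orbit of $\bC_{\chi}$ spans a rational CM Hodge structure with Mumford--Tate group a quotient of $Z$; by the symmetry $\overline{\Theta^{p,q}}=\Theta^{q,p}$ of CM Hodge types one checks that a suitable Tate twist $\Theta$ of it is fully twisted --- it is effective and its top-type characters include a twist of $\bC_{\chi}$ --- and by (a refinement of) the main theorem of~\cite{Abdulali2005} such a $\Theta$ occurs as a sub-Hodge-structure of $H^{\bullet}(B,\bQ)$ for a product $B$ of CM abelian varieties with CM by subfields of $D(A)$. Moreover $B$ can be chosen so that, with $X=A^{d_{\rho}}\times B$, the character $\psi_{0}\subset\Theta_{\bC}$ corresponding to $\bC_{\chi}$ affords the same character of $G(A\times X)_{\bC}$ as $\bC_{\chi}$ does; this compatibility is the real content, and is where the structure of $Z$ above is used.

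Finally, with $X=A^{d_{\rho}}\times B$, consider inside $H^{\bullet}(X,\bC)$ the submodule $V_{\rho}\otimes\psi_{0}$: it contains an $(n,0)$-form (the product of the $(c_{\rho},0)$-form of $V_{\rho}$ and the top-type form spanning $\psi_{0}$), and each of its $\Aut(\bC)$-conjugates is effective, being a tensor product of an effective conjugate of $V_{\rho}$ with an effective character of $\Theta_{\bC}$. Hence the smallest Hodge structure $\tW$ containing $V_{\rho}\otimes\psi_{0}$ is effective and contains an $(n,0)$-form, i.e.\ is fully twisted; and by the compatibility arranged above, the $\Aut(\bC)$-equivariant sum of the evaluation isomorphisms over the conjugates identifies $W$ with $\tW$ as Hodge structures, exactly as at the end of the proof of Proposition~\ref{weakproductlemma}. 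This exhibits $A$ as dominated by abelian varieties of the form $A^{d_{\rho}}\times B$; when $\rho$ is trivial this is just the domination of CM abelian varieties by CM abelian varieties from~\cite{Abdulali2005}. The main obstacle is the third paragraph: pinning down the central torus of a type~IV \pel-abelian variety and producing a CM abelian variety $B$, with CM by subfields of $D(A)$, whose Hodge group realizes $\chi$ compatibly with $A$; the semisimple splitting and all the Tate-twist bookkeeping are routine given Definition~\ref{selfdominated} and the earlier results.
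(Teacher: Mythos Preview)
Your strategy coincides with the paper's: write an irreducible $G(A)_{\bC}$-constituent as $V_{\rho}\otimes\chi$, handle $V_{\rho}$ by weak self-domination, and handle the central character $\chi$ via the CM theory of \cite{Abdulali2005}. Your Hodge-type computation for $\bC_{\chi}$ is correct, and the preliminary Tate twist is a harmless simplification.

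The gap is precisely where you flag it. You assert that $B$ can be chosen so that $\psi_0$ and $\bC_{\chi}$ afford the same character of $G(A\times X)_{\bC}$, but give no mechanism. Concretely, $\bC_{\chi}$ is a character of $G(A)$ and $\psi_0$ one of $G(B)$; for $W_0$ and $V_{\rho}\otimes\psi_0$ to be $G(A\times B)_{\bC}$-isomorphic one needs $\chi\circ p_A=\psi\circ p_B$ on $G(A\times B)$. Your $\Theta$ is built abstractly as a rational representation of the torus $Z$ and does not sit inside the cohomology of any variety, so the Hodge isomorphism $\Theta\cong\Theta'\subset H^{\bullet}(B,\bQ)$ furnished by \cite{Abdulali2005} carries no a priori $G(A\times B)$-equivariance, and the needed identity of characters does not follow.

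The paper fills this with a device you are missing: the Weil Hodge structures $W_i=\bigwedge_{E_i}^{m_i}H^1(A_i,\bQ)\subset H^{m_i}(A_i,\bQ)$, and $W=\bigoplus_i W_i$. Since $G'(A)$ kills $W$, this is a CM Hodge structure \emph{already inside the cohomology of $A$}; over $\bC$ it breaks into lines on which $L(A)_{\bC}$ acts by $\det_{\alpha}^{\pm 1}$, so every character $\prod_{\alpha}\det_{\alpha}^{n_{\alpha}}$---in particular $\chi$---occurs in the tensor algebra of $W$, hence in the cohomology of a power of $A$. Taking the irreducible rational Hodge structure $Z$ there containing the $\chi$-line $W_{\chi}$ and applying \cite{Abdulali2005} yields a fully twisted $Z'\cong Z(w)$ inside $H^c(B,\bQ)$. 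Now the isomorphism $\varphi\colon Z\to Z'$ is a morphism of Hodge structures between subspaces of cohomology, hence automatically $G(A\times B)$-equivariant; so $G(A\times B)$ acts on $Z'_{\chi}=\varphi(W_{\chi})$ by the same character as on $W_{\chi}$, namely $\chi\circ p_A$, and the compatibility is free. With this in hand, the endgame (form $U'=V_{\rho}\otimes Z'_{\chi}$, pass to the smallest Hodge structure containing it, use that some $\sigma$-conjugate of $U'$ has a top-degree holomorphic form by Definition~\ref{selfdominated} together with full twistedness of $Z'$) is exactly your final paragraph.
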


\begin{proof}
We may assume that $A = A_1^{n_1} \times A_2^{n_2} \times \dots \times A_{\ell}^{n_{\ell}}$ where the $A_i$ are pairwise nonisogenous abelian varieties.
Let $D_i = D(A_i)$, $E_i$ the center of $D_i$, $F_i$ the maximal totally real subfield of $E_i$, $S_i$ the set of embeddings of $F_i$ into $\bR$, $V_i = H^1(A_i, \bQ)$, and, $m_i = \dim_{E_i} V_i$.
For each $i$ we have $V_{i,\bR} = \bigoplus_{\alpha \in S_i} V_{\alpha}$, and, $V_{\alpha, \bC} = Y_{\alpha} \oplus \overline{Y}_{\alpha}$ as in the proof of the previous theorem.
Let $S$ be the disjoint union of the sets $S_i$.
We then have $L(A)_{\bR} = \prod_{\alpha \in S} L_{\alpha}$, where
$L_{\alpha} = U(p_{\alpha}, q_{\alpha})$.

Let $W_i = \bigwedge_{E_i}^{m_i} H^1(A_i, \bQ)$ be the Weil Hodge structure in $H^{m_i}(A_i,\bQ)$ (see \cite{MoonenZarhin98}).
Let
\begin{equation}
\label{w}
	W = \bigoplus_{i=1}^{\ell} W_i.
\end{equation}
Then $W_{\bC} = \bigoplus_{\alpha \in S} W_{\alpha}$, where $W_{\alpha} = \bigwedge^{m_i }Y_{\alpha} \oplus \bigwedge^{m_i} \overline{Y}_{\alpha}$ for $\alpha \in S_i$.
$\bigwedge^{m_i} Y_{\alpha}$ is of Hodge type $(p_{\alpha},q_{\alpha})$ and $\bigwedge^{m_i} \overline{Y}_{\alpha}$ is of Hodge type $(q_{\alpha},p_{\alpha})$.
We note that $G'(A)$ acts trivially on $W$, so the Hodge group of $W$ is abelian and, therefore, $W$ is a Hodge structure of CM-type.

Let $P_{\alpha} \colon L(\bC) = \prod_{\beta \in S} L_{\beta, \bC} \to L_{\alpha, \bC}$
be the projection.
For $g \in L(\bC)$, let $\det_{\alpha}(g) = \det P_{\alpha} (g)$.

Let $V$ be an irreducible Hodge substructure of $H^b (A^d, \bQ)$ for some~$b$, $d$.
If $G'(A)$ acts trivially on $V$, then $V$ is of CM-type, so by \cite{Abdulali2005}*{Theorem~3, p.~159} there exists an abelian variety $B$ of CM-type and a fully twisted Hodge structure $V'$ in the cohomology of $B$ such that $V'$ is isomorphic to a Tate twist of $V$.
Suppose next that $G'(A)$ acts nontrivially on $V$.
Let $U$ be an irreducible $G(A)_{\bC}$-submodule of $V_{\bC}$ and denote by $\rho$ the action of $G'(A)_{\bC}$ on $U$.
Since $A$ is weakly self-dominated there exists an irreducible $L(A)_{\bC}$-submodule $V_{\rho}$ of $H^{c_{\rho}}(A^{d_{\rho}}, \bC)$ satisfying the conditions of Definition \ref{selfdominated}.
Then, as a $G(A)_{\bC}$-module, $U$ is equivalent to  $V_{\rho} \otimes \chi$, where $\chi$ is a character of the form $\chi = \bigotimes_{\alpha \in S} \det_{\alpha}^{n_{\alpha}}$.
The character $\chi$ occurs in the tensor algebra of $W$.
Let $Z$ be an irreducible Hodge structure in the tensor algebra of $W$ such that
$Z_{\bC}$ contains an irreducible submodule $W_{\chi}$ on which $L(A)_{\bC}$ acts as the character $\chi$.

By the main theorem of \cite{Abdulali2005} (Theorem~3, p.~159), there exist an abelian variety $B$ of CM-type
and an irreducible Hodge structure $Z' \subset H^c (B, \bQ)$ such that
$Z'$ is isomorphic to a Tate twist $Z(w)$ of $Z$, and, $Z'$ is fully twisted.
Let $\varphi \colon Z \to Z'$ be an equivalence of Hodge structures.
Let $Z'_{\chi} = \varphi(W_{\chi})$.
Then there exists $\sigma \in \Aut(\bC)$ such that $(Z'_{\chi})^{\sigma}$ contains
a nonzero $(c,0)$-form.
Let $U' = V_{\rho} \otimes Z'_{\chi} \subset H^{c_{\rho}+c} (A \times B, \bC)$.
Then $U'^{\sigma}$ contains a nonzero $(c_{\rho}+c, 0)$-form.
Let $\tU'$ be the smallest Hodge structure such that $U' \subset \tU'_{\bC}$.
Then $\tU'$ is a primary $G(A \times B)$-module.
Any irreducible submodule $V'$ of $\tU'$ is fully twisted and isomorphic to a Tate twist of $V$.
\end{proof}

\begin{remark}
\label{weakimpliesdominationremark}
In the above situation, let $\sB$ be a set of abelian varieties such that given any irreducible Hodge structure $Z$ in the tensor algebra of $W$ \eqref{w}, there exists a fully twisted Hodge structure $Z'$ in the cohomology of some $B \in \sB$, such that $Z'$ is isomorphic to a Tate twist of $Z$.
Then, the proof of Theorem~\ref{weakimpliesdomination} shows that $A$ is dominated by abelian varieties of the form $A^n \times B$, where $n$ is a positive integer, and, $B \in \sB$.
\end{remark}

Combining the previous results we get the following theorem.

\begin{theorem}
\label{maintheorem}
Let $A$ be an abelian variety  of \pel-type. Assume that each simple factor of $A$ is of type \textup{IV}.
Suppose $G'(A)(\bR) \cong \prod_{\alpha \in S} SU(p_{\alpha}, q_{\alpha})$, where
$\vert p_{\alpha} - q_{\alpha} \vert = 1$ for all $\alpha$.
Then any power of $A$ is dominated by the set of abelian varieties of the form $A^n \times B$, where $n$ is a positive integer, and $B$ is an abelian variety of CM-type.
\end{theorem}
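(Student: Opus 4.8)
The plan is to deduce the statement from the results already established, with the only new ingredient being that the relevant hypotheses survive passage to a power. First, the conditions imposed on $A$ here --- $A$ of \pel-type, every simple factor of type~IV, and $\vert p_\alpha - q_\alpha \vert = 1$ for all $\alpha \in S$ --- are exactly those of Theorem~\ref{mainexample}, so that theorem gives at once that $A$ is weakly self-dominated. Now fix a positive integer $k$; the goal is to show that $A^k$ is dominated by the asserted class, and for this I would verify that $A^k$ satisfies the three hypotheses of Theorem~\ref{weakimpliesdomination}: it is weakly self-dominated, it is of \pel-type, and each of its simple factors is of type~IV.

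Two of these are immediate: $A^k$ is weakly self-dominated by the second assertion of Lemma~\ref{selfdominationlemma}, and the simple factors of $A^k$ are precisely those of $A$, hence of type~IV. For the \pel-type property, observe that $H^1(A^k,\bQ) = H^1(A,\bQ)^{\oplus k}$ with the diagonal Hodge structure, so that $G(A^k)$ is $G(A)$ embedded diagonally in $GL(H^1(A^k,\bQ))$, and hence $G'(A^k) = G'(A)$ under the same embedding. Similarly, since $D(A^k) = M_k(D(A))$ is Morita equivalent to $D(A)$, the unitary group of the Hermitian form induced on $H^1(A^k,\bQ)$ over $D(A^k)$ is isomorphic to that of $A$, so the description \eqref{lefschetzproduct} identifies $L(A^k)$ with $L(A)$ embedded diagonally; in particular $L(A^k)^0$ and $L(A)^0$ have the same semisimple part. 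As the semisimple parts of $G(A)$ and $L(A)^0$ coincide by hypothesis, so do those of $G(A^k)$ and $L(A^k)^0$, and therefore $A^k$ is of \pel-type.

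Applying Theorem~\ref{weakimpliesdomination} to $A^k$ then shows that $A^k$ is dominated by the set of abelian varieties of the form $(A^k)^m \times B = A^{km} \times B$, where $m$ is a positive integer and $B$ is a product of CM abelian varieties --- in particular $B$ is an abelian variety of CM-type. Each such $A^{km} \times B$ has the required shape $A^n \times B$ with $n = km$, and since being dominated by a subclass is stronger than being dominated by the full class, $A^k$ is dominated by $\{\, A^n \times B : n \geq 1,\ B \text{ of CM-type}\,\}$; as $k$ was arbitrary, this is exactly the assertion of the theorem. The one step carrying any content is the verification that taking a power preserves the \pel-type property (the preservation of weak self-domination and of the type of the simple factors being either a citation or a triviality); I expect the routine bookkeeping around $G(A^k)=G(A)$, $G'(A^k)=G'(A)$, and $L(A^k)\cong L(A)$ to be the main --- though entirely mechanical --- obstacle.
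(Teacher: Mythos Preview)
Your proposal is correct and follows exactly the route the paper takes: the paper presents Theorem~\ref{maintheorem} as an immediate combination of Theorem~\ref{mainexample}, Lemma~\ref{selfdominationlemma}, and Theorem~\ref{weakimpliesdomination}, without a separate proof. The only thing you add is the routine verification that passing to a power preserves the \pel-type property (via $G(A^k)=G(A)$ and $L(A^k)\cong L(A)$), which the paper leaves implicit.
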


\section{The General Hodge Conjecture}

We now apply the results of the previous section to deduce the general Hodge conjecture for \emph{products} of some of the abelian varieties for which we proved the general Hodge conjecture in \cite{Abdulali2004}.

\begin{theorem}
\label{ghc}
Let $\sA$ be the class of abelian varieties of \pel-type which are isogenous to products of abelian varieties of the following types:
\begin{enumerate}
\item
	a simple $3$-dimensional abelian variety with endomorphism algebra $\bQ (\sqrt{-1})$,
	with a polarization given by a hermitian form of signature $(2,1)$;
\item
	a simple $5$-dimensional abelian variety with endomorphism algebra $\bQ (\sqrt{-1})$,
	with a polarization given by a hermitian form of signature $(3,2)$;
\item
	an elliptic curve with CM by $\bQ (\sqrt{-1})$;
\item
	a simple $3$-dimensional abelian variety with endomorphism algebra $\bQ (\sqrt{-3})$,
	with a polarization given by a hermitian form of signature $(2,1)$;
\item
	a simple $5$-dimensional abelian variety with endomorphism algebra $\bQ (\sqrt{-3})$,
	with a polarization given by a hermitian form of signature $(3,2)$;
\item
	an elliptic curve with CM by $\bQ (\sqrt{-3})$;
\end{enumerate}
Then, any $A \in \sA$ is dominated by $\sA$, and the general Hodge conjecture holds for all members of $\sA$.
\end{theorem}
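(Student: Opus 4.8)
The plan is to derive the domination statement from the results of \S4 applied to the Weil Hodge structures of $A$, and then to obtain the general Hodge conjecture from Proposition~\ref{groprop} together with \cite{Abdulali2004}.

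Since $\sA$, domination, and the two Hodge conjectures are all invariant under isogeny, I would begin by replacing $A$ by an isogenous product $A_1^{n_1}\times\dots\times A_\ell^{n_\ell}$ in which the $A_i$ are pairwise nonisogenous simple abelian varieties, each of one of the six listed types; every $A_i$ is then of type~IV (an elliptic curve with CM by $\bQ(\sqrt{-1})$ or $\bQ(\sqrt{-3})$ is of type~IV, with division algebra equal to that imaginary quadratic field). By the multiplicativity of the Lefschetz group \cite{Murty}*{Lemma~2.1} we have $L(A)=\prod_iL(A_i)$ --- a self-product of a simple abelian variety contributes the same factor as the variety itself, by Morita equivalence over the endomorphism algebra, so in particular $L(A^n)=L(A)$ --- and, combined with $G(A)\subset\prod_iG(A_i)$ and the hypothesis that $A$ is of \pel-type, this forces each $A_i$ to be of \pel-type and $G'(A)=\prod_iG'(A_i)$. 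Writing $G'(A)(\bR)\cong\prod_{\alpha\in S}SU(p_\alpha,q_\alpha)$ as in Theorem~\ref{maintheorem}, the factors of types (1), (2), (4), (5) contribute the groups $SU(2,1)$ or $SU(3,2)$ and the elliptic curve factors contribute trivial groups, so $\lvert p_\alpha-q_\alpha\rvert=1$ for every $\alpha$. Thus $A$ satisfies the hypotheses of Theorems~\ref{mainexample} and~\ref{weakimpliesdomination}: it is weakly self-dominated, and it is dominated by the abelian varieties $A^n\times B$, where $n\geq1$ and $B$ is a product of CM abelian varieties with CM by subfields of $D(A)$.

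Now $D(A)$ is a product of copies of $\bQ(\sqrt{-1})$ and $\bQ(\sqrt{-3})$; its only CM subfields are $\bQ(\sqrt{-1})$ and $\bQ(\sqrt{-3})$, and its normal closure is $\bQ(\zeta_{12})$, every CM type of which is imprimitive --- the four CM types of $\bQ(\zeta_{12})$ are induced, two apiece, from $\bQ(\sqrt{-1})$ and $\bQ(\sqrt{-3})$. Hence every CM abelian variety occurring in the dominating family is isogenous to a product of elliptic curves with CM by $\bQ(\sqrt{-1})$ or $\bQ(\sqrt{-3})$, that is, to a product of abelian varieties of types (3) and (6). Therefore each $A^n\times B$ is isogenous to a product of abelian varieties of types (1)--(6), and it is again of \pel-type by the same Lefschetz-group computation (using $L(A^n)=L(A)$ and that the CM elliptic curves contribute trivially to the semisimple part). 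Consequently $A^n\times B\in\sA$, and $A$ is dominated by $\sA$.

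For the general Hodge conjecture, observe first that $A\times B\in\sA$ whenever $A,B\in\sA$, so by Proposition~\ref{groprop} it suffices to prove the \emph{usual} Hodge conjecture for every $X\in\sA$. Fix such an $X$, written as above. Because $X$ is of \pel-type, $G'(X)=\prod_iG'(X_i)$ with each $G'(X_i)$ a $\bQ$-form of $SL_{m_i}$ (trivial for the elliptic curve factors), so by the first fundamental theorem of invariant theory for $SL_m$ the Hodge classes on the powers of $X$ are generated by divisor classes, classes coming from $\End(X)$, and the Hodge classes in the subalgebra generated by the Weil Hodge structures $W_i=\bigwedge_{E_i}^{m_i}H^1(X_i)$, where $E_i=D(X_i)$ (and $W_i=H^1(X_i)$ when $X_i$ is an elliptic curve). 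A computation of Hodge numbers shows that $W_i$ has $\bQ$-rank $2$, with Hodge types $(p_\alpha,q_\alpha)$ and $(q_\alpha,p_\alpha)$; hence a suitable Tate twist of $W_i$ is a polarizable weight-one Hodge structure with CM by $E_i\in\{\bQ(\sqrt{-1}),\bQ(\sqrt{-3})\}$, so it is the first cohomology of an elliptic curve with CM by that field. Since $\bQ(\sqrt{-1})$ and $\bQ(\sqrt{-3})$ each have class number one, there are, up to isogeny, just two such curves, $C_1$ and $C_2$; so the Hodge classes built from the $W_i$ are, after suitable Tate twists, Hodge classes on powers of $C_1\times C_2$. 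For products of $C_1$ and $C_2$ the Hodge group coincides with the Lefschetz group, so their Hodge rings are generated by divisor classes and the usual Hodge conjecture holds there; moreover the isomorphisms identifying the $W_i$ with Tate twists of $H^1(C_1)$ or $H^1(C_2)$ are induced by algebraic correspondences, by \cite{Abdulali2004}. Assembling these facts, every Hodge class on $X$ is algebraic, proving the usual Hodge conjecture for $X$ and therefore the general Hodge conjecture for all of $\sA$.

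The domination statement is essentially formal once Theorems~\ref{mainexample} and~\ref{weakimpliesdomination} are in hand, the only arithmetic inputs being $L(A^n)=L(A)$ for type~IV and the imprimitivity of every CM type of $\bQ(\zeta_{12})$. The main obstacle, I expect, is the last paragraph --- specifically the assertion that the abstract Hodge-theoretic identification of each Weil Hodge structure $W_i$ with a Tate twist of $H^1(C_1)$ or $H^1(C_2)$ is motivic, i.e.\ realized by an algebraic cycle on $X_i\times C_k$. This is exactly the input taken from \cite{Abdulali2004}, and it is what restricts the theorem to the endomorphism algebras $\bQ(\sqrt{-1})$, $\bQ(\sqrt{-3})$ and the signatures $(2,1)$, $(3,2)$.
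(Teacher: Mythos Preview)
Your domination argument follows the paper's closely: both apply Theorems~\ref{mainexample} and~\ref{weakimpliesdomination} and then argue that the CM factor $B$ may be taken to be a product of the two elliptic curves. The paper does this via Remark~\ref{weakimpliesdominationremark} and \cite{Abdulali2005}*{Proposition~5}, using linear disjointness of $\bQ(\sqrt{-1})$ and $\bQ(\sqrt{-3})$; your detour through CM types of $\bQ(\zeta_{12})$ reaches the same conclusion.

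For the usual Hodge conjecture you take a different route. The paper first establishes (citing \cite{Abdulali2004}*{p.~208}) the splitting $G(A)=G'(A)\times G(E_1)\times G(E_3)$, reduces to the single-field pieces $A_1$ and $A_3$, and then argues by induction on the number $r$ of non-isogenous simple factors: the inductive step splits the Hodge ring via K\"unneth into spaces $\hom_{G(X)}\bigl(H^c(\overline X),H^b(X_r^{k_r})\bigr)$, observes that any such equivalence factors through the Weil Hodge structures $W_d$, $W'_d$ on which only $G(E)$ acts, and then invokes Ribet's theorem \cite{Ribet} for powers of the prime-dimensional $X_r$ together with \cite{Abdulali2004} for powers of $X_r\times E$. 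You instead appeal directly to the first fundamental theorem of invariant theory for $SL_m$ and transfer everything to products of $C_1$, $C_2$.

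There is a genuine gap in your invariant-theory step. The FFT for $SL_m$ tells you that the $G'(X)$-invariants in $H^{\star}(X,\bC)$ are generated by contraction-type invariants (which give divisor classes) together with \emph{all} determinant-type invariants in $\bigwedge^{m_i}\bigl(Y_\alpha^{\oplus n_i}\bigr)$, not only the ``pure'' ones $\bigwedge^{m_i}Y_\alpha^{(j)}$ coming from a single copy of $X_i$. When $n_i>1$ there are cross-terms --- for instance the $SL_{m_i}$-invariant part of $\bigwedge^{a}Y_\alpha^{(1)}\wedge\bigwedge^{m_i-a}Y_\alpha^{(2)}$ for $0<a<m_i$ --- and these do \emph{not} lie in the subalgebra generated by your $W_i$'s; products of such cross-terms with their conjugates produce Hodge classes that your description misses. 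Neither ``divisor classes'' nor ``classes coming from $\End(X)$'' (which lie in the Lefschetz ring) account for them. What closes this gap is exactly Ribet's theorem: since $\dim X_i\in\{3,5\}$ is prime, the usual Hodge conjecture holds for all powers of each $X_i$, so the cross Weil classes are algebraically tied to the pure ones. Once you add this input, your transfer to $C_1$, $C_2$ via \cite{Abdulali2004} goes through, and your argument becomes a valid --- and somewhat more direct --- alternative to the paper's induction.
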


\begin{proof}
Let $A \in \sA$. Up to isogeny, $A = A_1 \times A_3$, where each simple factor of $A_1$ has endomorphism algebra $\bQ (\sqrt{-1})$, and each simple factor of $A_3$ has endomorphism algebra $\bQ (\sqrt{-3})$. We may assume that
\[
	A_1 = B_1^{m_1} \times \dots \times B_s^{m_s} \times E_1^m,
\]
where the $B_i$ are pairwise nonisogenous simple abelian varieties of dimension $3$ or $5$, and, $E_1$ is the elliptic curve with CM by $\bQ (\sqrt{-1})$.
Also,
\[
	A_3 = C_1^{n_1} \times \dots \times C_t^{n_t} \times E_3^n,
\]
where the $C_j$ are pairwise nonisogenous simple abelian varieties of dimension $3$ or $5$, and, $E_3$ is the elliptic curve with CM by $\bQ (\sqrt{-3})$.

We have shown in \cite{Abdulali2004}*{p.~208} that
$G(B_i \times E_1) = G'(B_i) \times G(E_1)$
and $G(C_j \times E_3) = G'(C_j) \times G(E_3)$.
It follows that if $m$ and $n$ are positive, then,
\[
G(A) = G'(A) \times G(E_1) \times G(E_3) = G(A_1) \times G(A_3).
\]

By Theorem \ref{maintheorem}, $A$ is dominated by abelian varieties of the form $A^n \times B$, where $B$ is an abelian variety of CM type.
Since $\bQ (\sqrt{-1})$ and $\bQ (\sqrt{-3})$ are linearly disjoint,
Remark~\ref{weakimpliesdominationremark} and \cite{Abdulali2005}*{Proposition~5, p.~160}
show that $B$ may be taken to be of the form $E_1^i \times E_3^j$.
Thus $A$ is dominated by $\sA$, and,
the usual Hodge conjecture for all members of $\sA$ implies the general Hodge conjecture for the same class.

Since $A = A_1 \times A_3$, with $G(A) = G(A_1) \times G(A_3)$,
the usual Hodge conjecture for each of $A_1$ and $A_3$ implies
the usual Hodge conjecture for $A$.
Let $X$ be one of $A_1$ or $A_3$.
Let $K$ be the endomorphism algebra of a simple factor of $X$, and let $E$ be the elliptic curve with CM by $K$.
Write
\[
	X = X_1^{k_1} \times \dots \times X_r^{k_r} \times E^{\ell},
\]
where the $X_i$ are pairwise nonisogenous simple abelian varieties of dimension $3$ or $5$, and
assume without loss of generality that $\ell > 0$. 
Then $G(X) = G'(X) \times G(E)$.
We shall prove the usual Hodge conjecture for $X$ by induction on $r$, the case $r = 1$ being Corollary 3.3 of \cite{Abdulali2004}.
For $r > 1$, let
\[
	\overline{X} = X_1^{k_1} \times \dots \times X_{r-1}^{k_{r-1}} \times E^{\ell},
\]
so that $X = \overline{X} \times X_r^{k_r}$,
and assume the usual Hodge conjecture holds for $\overline{X}$.

The Hodge ring of $X$ is given by
\begin{align*}
	H^\star(X, \bQ)^{G(X)} &=
	\bigoplus_{a,b} \left( H^a(\overline{X}, \bQ) \otimes H^b(X_r^{k_r},\bQ) \right)^{G(X)} \\
	&= \bigoplus_{c,b} \hom_{G(X)} \left( H^c (\overline{X}, \bQ),H^b(X_r^{k_r},\bQ) \right).
\end{align*}
Thus the Hodge ring of $X$ is generated by equivalences between Hodge substructures of the cohomology rings of $\overline{X}$ and $X_r^{k_r}$.

Let $W$ be the Weil Hodge structure in the cohomology of $X_r$.
Then $G(E)_{\bC}$ acts on $W_{\bC}$ as $\det \oplus \det^{-1}$.
Let $d$ be a positive integer.
Denote by $W_d$ the Hodge structure in the cohomology of $X_r^d$ on which $G(E)_{\bC}$ acts as $\det^d \oplus \det^{-d}$.
Similarly, denote by $W'_d$ the Hodge structure in the cohomology of $E^d$,
such that $G(E)_{\bC}$ acts as $\det^d \oplus \det^{-d}$ on $W'_{d,\bC}$

Let $U$ and $U'$ be isomorphic irreducible $G(X)$-submodules of $H^b(X_r^{k_r},\bQ)$
and $H^c (\overline{X}, \bQ)$ respectively.
Then $G'(X)$ acts trivially on $U$ and $U'$.
Thus every irreducible $G(X)_{\bC}$-submodule of $U_{\bC}$
is equivalent to $\det^a$ for some $a$.
Hence $U$ is equivalent to the Hodge structure $W_d$ for some $d \ge 0$.
Since $\dim X_r$ is a prime, the usual Hodge conjecture holds for all powers of $X_r$
by \cite{Ribet}*{Theorem 2}, and the equivalence of $U$ with $W_d$ is induced by an algebraic cycle.
Similarly, $U'$ is equivalent to $W'_d$.
By our induction hypothesis, the usual Hodge conjecture holds for $\overline{X}$, so the equivalence of $U'$ with $W'_d$ is induced by an algebraic cycle.
Since the usual Hodge conjecture is known for all powers of $X_r \times E$ (see \cite{Abdulali2004}), the equivalence of $W_d$ and $W'_d$ is also given by an algebraic cycle.
Thus the equivalence between $U$ and $U'$ is induced by an algebraic cycle.
It follows that $\hom_{G(X)} \left( H^c (\overline{X}, \bQ),H^b(X_r^{m_r},\bQ) \right)$ is generated by algebraic cycles, proving the usual Hodge conjecture for $X$.
\end{proof}

\begin{bibdiv}
\begin{biblist}

\bib{Abdulali1997}{article}{
	author={Abdulali, Salman},
	title={Abelian varieties and the general Hodge conjecture},
	journal={Compositio Math.},
	volume={109},
	date={1997},
	pages={341\ndash 355},
	review={\MR{98m:14008}},
}

\bib{Abdulali1999}{article}{
	author={Abdulali, Salman},
	title={Abelian varieties of type III and the Hodge conjecture},
	journal={Internat. J. Math.},
	volume={10},
	date={1999},
	pages={667\ndash 675},
	review={\MR{2000g:14013}},
}

\bib{Abdulali2000}{article}{
	author={Abdulali, Salman},
	title={Filtrations on the cohomology of abelian varieties},
	book={
		title={The Arithmetic and Geometry of Algebraic Cycles (Banff, AB, 1998)},
		editor={Gordon, B. B.},
		editor={Lewis, J. D.},
		editor={M\"uller-Stach, S.},
		editor={Saito, S.},
		editor={Yui, N.},
		series={CRM Proc. Lecture Notes},
		volume={24},
		publisher={Amer. Math. Soc.},
		place={Providence, RI},
		date={2000},
	},
	pages={3\ndash 12},
	review={\MR{2001d:14011}},
}

\bib{type3ghc}{article}{
	author={Abdulali, Salman},
	title={Hodge structures on abelian varieties of type III},
	journal={Ann. of Math. (2)},
	volume={155},
	date={2002},
	pages={915\ndash 928},
	review={\MR{2003g:14008}},
}

\bib{Abdulali2004}{article}{
	author={Abdulali, Salman},
	title={Hodge structures on abelian varieties of type IV},
	journal={Math. Z.},
	volume={246},
	date={2004},
	pages={203\ndash 212},
	review={\MR{2004k:14013}},
}

\bib{Abdulali2005}{article}{
	author={Abdulali, Salman},
	title={Hodge structures of CM-type},
	journal={J. Ramanujan Math. Soc.},
	volume={20},
	date={2005},
	pages={155\ndash 162},
	review={\MR{2006g:14016}},
}

\bib{900}{article}{
	author={Pierre Deligne (notes by J.~S.~Milne)},
	title={Hodge cycles on abelian varieties},
	book={
		title={Hodge Cycles, Motives, and Shimura Varieties},
		series={Lecture Notes in Math.},
		volume={900},
		place={Berlin},
		publisher={Springer-Verlag},
		date={1982, 2nd corrected printing, 1989},
	},
	pages={9\ndash 100},
	review={\MR{84m:14046}},
}

\bib{GGK}{article}{
	author={Green, Mark},
	author={Griffiths, Phillip},
	author={Kerr, Matt},
	title={Mumford-Tate domains},
	journal={Boll. Unione Mat. Ital. (9)},
	volume={3},
	date={2010},
	pages={281--307},
	review={\MR{2666359}},
}

\bib{Grothendieck}{article}{
	author={Grothendieck, Alexander},
	title={Hodge's general conjecture is false for trivial reasons},
	journal={Topology},
	volume={8},
	date={1969},
	pages={299\ndash 303},
	review={\MR{40 \#5624}},
}

\bib{MoonenZarhin98}{article}{
	author={Moonen, B. J. J.},
	author={Zarhin, Yuri G.},
	title={Weil classes on abelian varieties},
	journal={J. Reine Angew. Math.},
	volume={496},
	date={1998},
	pages={83\ndash 92},
	review={\MR{99a:14010}},
}

\bib{Mumford1966}{article}{
	author={Mumford, David},
	title={Families of abelian varieties},
	book={
		title={Algebraic Groups and Discontinuous Subgroups (Boulder, Colo., 1965)},
		series={Proc. Sympos. Pure Math.},
		editor={Borel, A.},
		editor={Mostow, G. D.},
		volume={9},
		publisher={Amer. Math. Soc.},
		place={Providence, RI},
		date={1966},
	},
	pages={347\ndash 351},
	review={\MR{34 \#5828}},
}

\bib{Murty}{article}{
	author={Murty, V. Kumar},
	title={Exceptional Hodge classes on certain abelian varieties},
	journal={Math. Ann.},
	volume={268},
	date={1984},
	pages={197\ndash 206},
	review={\MR{85m:14063}},
}

\bib{murtybanff}{article}{
	author={Murty, V. Kumar},
	title={Hodge and Weil classes on abelian varieties},
	book={
		title={The Arithmetic and Geometry of Algebraic Cycles (Banff, AB, 1998)},
		editor={Gordon, B. B.},
		editor={Lewis, J. D.},
		editor={M\"uller-Stach, S.},
		editor={Saito, S.},
		editor={Yui, N.},
		series={NATO Sci. Ser. C Math. Phys. Sci.},
		volume={548},
		publisher={Kluwer Acad. Publ.},
		place={Dordrecht},
		date={2000},
	},
	pages={83\ndash 115},
	review={\MR{2001d:14013}},
}

\bib{Ribet}{article}{
	author={Ribet, Kenneth A.},
	title={Hodge classes on certain types of abelian varieties},
	journal={Amer. J. Math.},
	volume={105},
	date={1983},
	pages={523\ndash 538},
	review={\MR{85a:14030}},
}

\bib{Satake1}{article}{
	author={Satake, Ichiro},
	title={Holomorphic imbeddings of symmetric domains into a Siegel space},
	journal={Amer. J. Math.},
	volume={87},
	date={1965},
	pages={425\ndash 461},
	review={\MR{33 \#4326}},
}

\bib{Satake2}{article}{
	author={Satake, Ichiro},
	title={Symplectic representations of algebraic groups satisfying a certain analyticity condition},
	journal={Acta Math.},
	volume={117},
	date={1967},
	pages={215\ndash 279},
	review={\MR{35 \#6694}},
}

\bib{Shimura1}{article}{
	author={Shimura, Goro},
	title={On analytic families of polarized abelian varieties and automorphic functions},
	journal={Ann. of Math. (2)},
	volume={78},
	date={1963},
	pages={149\ndash 192},
	review={\MR{27 \#5934}},
}

\bib{Shimura2}{article}{
	author={Shimura, Goro},
	title={On the field of definition for a field of automorphic functions},
	journal={Ann. of Math. (2)},
	volume={80},
	date={1964},
	pages={160\ndash 189},
	review={\MR{29 \#4739}},
}

\end{biblist}
\end{bibdiv}

\end{document}